\numberwithin{equation}{section}
    \definecolor{red_cite}{RGB}{75, 128, 82}
\theoremstyle{plain}
\newtheorem{Th}{Theorem}[section]
\newtheorem{Lemma}[Th]{Lemma}
\newtheorem{Cor}[Th]{Corollary}
\newtheorem{prop}[Th]{Proposition}
\theoremstyle{definition}
\newtheorem{Def}[Th]{Definition}
\newtheorem{Rem}[Th]{Remark}
\newtheorem{?}[Th]{Problem}
\newcommand{\Mod}[1]{#1\text{-}\textrm{Mod}}
\title{Describing model categories througth homotopy tiny objects}
\author{Anna Giulia Montaruli}
\begin{document}

\begin{abstract}
Let $\mathcal C$ be a $\mathcal V$-enriched model category. We say that an object $x$ of $\mathcal C$ is homotopy tiny if the total right derived functor of $\mathcal C(x, -) : \mathcal{C} \rightarrow {\mathcal V}$ preserves homotopy weighted colimits. Let $\mathcal C_0$ be a full subcategory of $\mathcal C$ all of whose objects are homotopy tiny. Our main result says that the homotopy category of the category generated by $\mathcal C_0$ under weak equivalences and homotopy weighted colimits is equivalent to the homotopy category of the category $\mathcal V^{\mathcal C_0^{op}}$ of $\mathcal V$-enriched presheaves on $\mathcal C_0$ with values in $\mathcal V$. If $\mathcal C$ is generated by $\mathcal C_0$, then $\mathcal C$ is Quillen equivalent to $\mathcal V^{\mathcal C_0^{op}}$. Two special cases of our theorem are Schwede--Shipley's theorem on stable model categories and Elmendorf's theorem on equivariant spaces.
\end{abstract}

\maketitle

\section{Introduction}
An object $x$ of a category $\mathcal{C}$ is tiny if the functor $\mathcal{C}(x, -): \mathcal{C} \rightarrow \mathrm{Sets}$ preserves all small colimits. Common examples of tiny objects are the singleton in the category $\mathrm{Sets}$, or the retracts of representable functors in presheaf categories (see \cite[Proposition 2]{CauchyComplection}). On the other hand, for every ring $R$, the category $\Mod{R}$ of modules over $R$ has no tiny objects.

If the category $\mathcal{C}$ is enriched over a symmetric monoidal category $\mathcal{V}$, we could also talk about tiny objects in an enriched sense: these are the ones for which the $\mathcal{V}$-enriched functor $\mathcal{C}(x , -) : \mathcal{C} \rightarrow \mathcal{V}$ preserves all small colimits. For instance, it is well known that, in an enriched sense, the tiny objects of $\Mod{R}$ are the finitely generated projective modules (see \cite[Example 2]{CauchyComplection}).

If the category $\mathcal{C}$ is generated by tiny objects under colimits, then the Yoneda embedding induces an equivalence between $\mathcal{C}$ and the category of presheaves over the full subcategory of $\mathcal{C}$ of tiny objects; the same holds in an enriched setting too (see \cite[Theorem 5.26]{Kelly}). The main goal of the paper is to show a homotopical version of this result, i.e.\ to employ homotopy tiny objects (objects $x$ for which the total right derived functor of $\mathcal{C}(x, -): \mathcal{C} \rightarrow \mathcal{V}$ preserves small homotopy weighted colimits) to show a Quillen equivalence between a model category $\mathcal{C}$, generated by its homotopy tiny objects under homotopy weighted colimits and weak equivalences, and the category of $\mathcal{V}$-enriched presheaves over the full subcategory of homotopy tiny objects of $\mathcal{C}$.
This will allow us to unify two well known results, namely Elmendorf's Theorem, which states, under certain conditions, that the category of $G$-spaces is Quillen equivalent to the presheaf category over the category of orbits, and Schwede--Shipley's Theorem (see \cite[Theorem 3.9.3]{Schwede-Shipley}), which asserts that, in a spectral model category, the full subcategory generated by a set of compact generators is Quillen equivalent to the spectral presheaf category $(Sp^{\Sigma})^{\mathcal{G}^{op}}$. 

The outline of the paper is the following. In Section \ref{notation} we list the notation used. In Section \ref{preliminaries} we give some preliminary definitions and we recall some already known results. Section \ref{results} introduces homotopy tiny objects, gives some intermediate statements and ends with the main results (Theorem \ref{Th} and Corollary \ref{result}). Section \ref{examples} shows how Elmendorf's Theorem and Schwede--Shipley's Theorem can be seen as consequences of Corollary \ref{result}.

In the whole exposition, we assume the reader has some familiarity with the notion of model category and some other related topics, such as (total) left and (total) right derived functors, Quillen adjunctions and Quillen equivalences. We refer to \cite{Hovey} for a detailed explanation. The full understanding of the paper also requires some basic knowledge in enriched category theory: the main reference for this is \cite{Kelly}.

\subsection*{Acknowledgements}
The proposal of this work comes from my PhD co-advisor Gregory Arone, to whom I'm very grateful for his supervision in the realization of the paper. I would also like to thanks Peter LeFanu Lumsdaine for some valuable comments and suggestions on a first draft of the paper.

\section{Notation and assumptions}\label{notation}

Here below we list the notation used in the paper. 

\begin{itemize}
    \item[$\diamond$] by $x_c$ we denote the cofibrant replacement of the object $x$;
    \item[$\diamond$] by $y_f$ we denote the fibrant replacement of the object $y$;
    \item[$\diamond$] by $\gamma_{\mathcal{C}}$ we denote the canonical functor from a model category $\mathcal{C}$ to $Ho(\mathcal{C})$;
    \item[$\diamond$] by $x \sim y$ we mean that $x$ and $y$ are weakly equivalent, i.e.\ that there exists a zig zag of weak equivalences between them;
    \item[$\diamond$] by $\emptyset _\mathcal{C}$ we denote the initial object of a category $\mathcal{C}$; when it is clear from the context, we don't specify the category it belongs to;
    \item[$\diamond$] by \textit{llp} we denote the left lifting property;
    \item[$\diamond$] by $W \underset{\mathcal{D}}{\otimes} F$ we denote the colimit of the diagram $F : \mathcal{D} \rightarrow \mathcal{C}$ weighted by $W: \mathcal{D}^{op} \rightarrow \mathcal{V}$, or equivalently, the coend of $W$ and $F$ (see \cite[Proposition 4.1.5]{Loregian} for the equivalence);
    \item[$\diamond$] by $ x \xrightarrow{iso} y$ we mean that there is a canonical isomorphism between $x$ and $y$;
    \item[$\diamond$] by $\underset{\mathcal{D}}{\coprod} F$ we denote the coproduct of the small diagram $F$ over $\mathcal{D}$.
    \item[$\diamond$] by $y\underset{x}{\coprod} z$ we denote the pushout of the span $y \leftarrow x \rightarrow z$;
    \item[$\diamond$] by $y\times _x z$ we denote the pullaback of the span $y \rightarrow x \leftarrow z$.
    \item[$\diamond$] by $(-)^H : \mathcal{V}^G \rightarrow \mathcal{V}$ we deanote the $H$-fixed point functor, sending a $G$-space $X$ to $(X)^H \coloneqq \underset{H}{lim} X$ 
\end{itemize} 
Throughout the paper, we will assume that the fibrant and the cofibrant replacements are functorial assignments.

\section{Preliminaries}\label{preliminaries}

\subsection{First definitions}
\begin{Def}
A symmetric monoidal model category $\mathcal{V}$ is a symmetric monoidal category $(\mathcal{V}, \otimes, \mathbb{I})$ equipped with a model structure s.t.\ 
\begin{enumerate}
    \item (\textit{pushout-product axiom}) for any given cofibrations $x \xrightarrow{f} y$ and $z \xrightarrow{g} w$ in $\mathcal{V}$, the universal map induced by the pushout $(x \otimes w) \underset{x \otimes z}{\coprod} (y \otimes z) \rightarrow y \otimes w$ is cofibrant, and acyclic if $f$ or $g$ is so;
    \item the unit object $\mathbb{I}$ is cofibrant.
\end{enumerate}
\end{Def}
From now on, we will assume that $\mathcal{V}$ is a symmetric monoidal model category, whose model structure is also cofibrantly generated.
\begin{Def}\label{SP}
A $\mathcal{V}$-model category $\mathcal{C}$ is a $\mathcal{V}$-enriched category, tensored and cotensored over $\mathcal{V}$, equipped with a model structure such that, for every cofibration $f : a \rightarrow b$ and any fibration $g : c \rightarrow d$, the universal map $\mathcal{C}(b, c) \rightarrow \mathcal{C}(a, c) \times_{\mathcal{C}(a, d)} \mathcal{C}(b, d)$ is a fibration, acyclic if $f$ or $g$ are so.
\end{Def}

\subsection{The projective model structure of \texorpdfstring{$\mathcal{V}^{\mathcal{C}^{op}}$}{V-enriched presheaves over C}}
Recall that, given a model category $\mathcal{E}$ and a small category $\mathcal{D}$, the projective model structure, if it exists,  is the model structure on the functor category $\mathcal{E}^{\mathcal{D}}$ whose weak equivalences and fibrations are the maps that are objectwise weak equivalences and fibrations in $\mathcal{E}$. The existence of such a model structure is not always guaranteed, but it is in the special case we are going to consider, i.e.\ when $\mathcal{E}$ is the cofibrantly generated monoidal model category $\mathcal{V}$ (see \cite[Theorem 12.3.2]{Riehl}).

\begin{prop}\label{repr-functor-cofibrant}
Let $\mathcal{C}$ be a $\mathcal{V}$-model category. Then, every representable presheaf is cofibrant in the projective model structure of  $\mathcal{V}^{\mathcal{C}^{op}}$.
\end{prop}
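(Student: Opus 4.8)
The plan is to show that a representable presheaf $\mathcal{C}(-,x)$ has the left lifting property against every acyclic fibration in the projective model structure on $\mathcal{V}^{\mathcal{C}^{op}}$. Since the projective model structure is cofibrantly generated and its acyclic fibrations are exactly the objectwise acyclic fibrations, it suffices to check the lifting property against objectwise acyclic fibrations. So let $p : F \to G$ be a map of presheaves which is an acyclic fibration in $\mathcal{V}$ at every object, and suppose we are given a commutative square with $\mathcal{C}(-,x)$ in the top-left corner and $p$ on the right; we must produce a diagonal filler.

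The key step is the enriched Yoneda lemma: for any presheaf $H \in \mathcal{V}^{\mathcal{C}^{op}}$ there is a natural isomorphism $\mathcal{V}^{\mathcal{C}^{op}}(\mathcal{C}(-,x), H) \cong Hx$, where the right-hand side is the underlying set (or hom-object, as appropriate) of $Hx$. Under this identification, a lifting square
\[
\begin{tikzcd}
\mathcal{C}(-,x) \arrow[r] \arrow[d] & F \arrow[d, "p"] \\
\ast \arrow[r] & G
\end{tikzcd}
\]
corresponds to a point of $Gx$ together with the problem of lifting it along $p_x : Fx \to Gx$; more precisely, maps $\mathcal{C}(-,x) \to F$ over $G$ biject with lifts of the chosen map into $Gx$ along $p_x$. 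Since $p_x : Fx \to Gx$ is an acyclic fibration in $\mathcal{V}$ and the unit $\mathbb{I}$ is cofibrant (indeed any relevant lifting here reduces to a lifting problem against $\emptyset \to \mathbb{I}$, or simply to the surjectivity-type statement packaged by the enriched Yoneda lemma applied to the arrow object), the lift exists. Translating back through the Yoneda isomorphism gives the desired diagonal filler $\mathcal{C}(-,x) \to F$, proving that $\mathcal{C}(-,x)$ is projectively cofibrant.

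I expect the main technical obstacle to be bookkeeping the enriched Yoneda lemma correctly in the model-categorical lifting argument: one must be careful that "having the llp against objectwise acyclic fibrations" really is detected by the enriched hom $\mathcal{V}^{\mathcal{C}^{op}}(\mathcal{C}(-,x), -)$ evaluating to $(-)x$, and that the resulting lifting problem in $\mathcal{V}$ is the trivial one (a lift against an acyclic fibration out of a cofibrant domain, namely $\mathbb{I}$ or $\emptyset$). An alternative, cleaner route is to observe that the left adjoint to evaluation at $x$, namely $\mathbb{I} \otimes \mathcal{C}(-,x) \cong \mathcal{C}(-,x)$ (the "free presheaf on a generator at $x$"), is a left Quillen functor $\mathcal{V} \to \mathcal{V}^{\mathcal{C}^{op}}$ because its right adjoint $\mathrm{ev}_x$ preserves fibrations and acyclic fibrations by definition of the projective structure; hence it sends the cofibrant object $\mathbb{I}$ of $\mathcal{V}$ to a cofibrant object of $\mathcal{V}^{\mathcal{C}^{op}}$, which is precisely $\mathcal{C}(-,x)$. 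This adjunction argument avoids explicit lifting diagrams and is probably the version I would write up.
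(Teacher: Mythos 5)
Your proposal is correct and takes essentially the same route as the paper: both proofs reduce, via the enriched Yoneda lemma and the isomorphism $\mathbb{I} \otimes \mathcal{C}(-,x) \cong \mathcal{C}(-,x)$, to the lifting problem $\emptyset \to \mathbb{I}$ against an objectwise acyclic fibration in $\mathcal{V}$, which is solved by the cofibrancy of $\mathbb{I}$. Your ``alternative, cleaner route'' (observing that $v \mapsto v \otimes \mathcal{C}(-,x)$ is left Quillen because evaluation at $x$ is right Quillen for the projective structure, and applying it to the cofibrant object $\mathbb{I}$) is the same argument repackaged at the level of Quillen adjunctions rather than explicit lifting squares; it is arguably the tidier write-up. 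One small slip: the lifting diagram you display is drawn the wrong way around for the cofibrancy of $\mathcal{C}(-,x)$ --- the initial object $\emptyset$ should be in the top-left with $\mathcal{C}(-,x)$ in the bottom-left, and the filler goes $\mathcal{C}(-,x) \to F$ --- but your surrounding prose (lifting a map $\mathcal{C}(-,x) \to G$, equivalently $\mathbb{I} \to Gx$, along $p_x$) describes the correct problem.
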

\begin{proof}
Consider $y\in \mathcal{C}$, and take the representable presheaf $\mathcal{C}( - ,y)$. To show that this is a cofibrant object in the projective model structure of $\mathcal{V}^{\mathcal{C}^{op}}$, we need to show that the map $\emptyset \rightarrow \mathcal{C}(- ,y)$ has the \textit{llp} with respect to all the acyclic fibrations of $\mathcal{V}^{\mathcal{C}^{op}}$. Thanks to the canonical isomorphism 
\begin{equation*}
    \mathbb{I} \otimes \mathcal{C}( - ,y) \rightarrow \mathcal{C}( - ,y)
\end{equation*}
we can equivalently show that the map $\emptyset \rightarrow \mathbb{I} \otimes \mathcal{C}(- ,y)$
has the \textit{llp} with respect to all acyclic fibrations. Consider the diagram
\begin{equation*}
    \xymatrix{\emptyset \ar[r]\ar[d] & F \ar[d] \\
    \mathbb{I} \otimes \mathcal{C}( - , y) \ar[r] & G}
\end{equation*}
where $F\rightarrow G$ is an acyclic cofibration.
Using the cotensoredness of $\mathcal{C}$ over $\mathcal{V}$ and the Yoneda Lemma,
\begin{equation*}
    \mathcal{V}^{\mathcal{C}^{op}} (\mathbb{I} \otimes \mathcal{C}(- ,y) , F) \xrightarrow{iso} \mathcal{V} (\mathbb{I} , \mathcal{V}^{\mathcal{C}_0^{op}} ( \mathcal{C}( - , y) , F)) \xrightarrow{iso} \mathcal{C} (\mathbb{I} , Fy))
\end{equation*}
and
\begin{equation*}
    \mathcal{V}^{\mathcal{C}^{op}} (\mathbb{I} \otimes \mathcal{C}(- ,y) , G) \xrightarrow{iso} \mathcal{V} (\mathbb{I} , \mathcal{V}^{\mathcal{C}_0^{op}} ( \mathcal{C}( - , y) , G)) \xrightarrow{iso} \mathcal{C} (\mathbb{I} , Gy))
\end{equation*}
Since, in the projective model structure of $\mathcal{V}^{\mathcal{C}^{op}}$, acyclic fibrations are the one that are so componentwise in the model structure of $\mathcal{V}$, then the problem reduces to show the \textit{llp} for the map $\emptyset \rightarrow \mathbb{I}$ with respect to all acyclic fibrations $Fy \rightarrow Gy$ in $\mathcal{V}$; this follows immediately from the fact that $\mathbb{I}$ is cofibrant in the model structure of $\mathcal{V}$. 
\end{proof}

\subsection{Derived functors}

We first recall the definition of left (resp.\ right) derived and total left (resp.\ right) derived functors.

\begin{Def}
Let $\mathcal{C}$ be a model category, and $F : \mathcal{C} \rightarrow \mathcal{D}$ be a functor. The left derived functor of $F$ is a pair $(LF, t_{.})$, where $LF : Ho(\mathcal{C}) \rightarrow \mathcal{D}$ is a functor, and $t_{.} : LF \cdot \gamma_{\mathcal{C}} \rightarrow F$ is a natural transformation, which has the following universal property: for any other pair $(G : Ho(\mathcal{C}) \rightarrow \mathcal{D}, s_{.} : G\cdot \gamma_{\mathcal{C}} \rightarrow F)$, there exists a natural transformation $s'_{.} : G \rightarrow LF$ such that $t_{.} \cdot (s'_{\gamma_{\mathcal{C}} (-)}) = s_{.}$. For the sake of brevity, we simply denote by $LF$ the left derived functor of $F$.

The right derived functor of $F$ is defined dually, and denoted by $RF$.

\end{Def}

\begin{Def}
Let $\mathcal{C}$ be a model category. The total left derived functor $\mathbf{L}F$ of $F : \mathcal{C} \rightarrow \mathcal{D}$ is the left derived functor of the composition $\gamma_{\mathcal{D}}\cdot F : \mathcal{C} \rightarrow Ho(\mathcal{D})$. 

The total right derived functor of $F$ is defined dually, and denoted by $\mathbf{R}F$.
\end{Def}

The total left derived functor $(\mathbf{L} F)$ of a left Quillen adjoint 
$F : \mathcal{C} \rightarrow \mathcal{D}$ can be computed, at every $x\in\mathcal{C}$, as $(\mathbf{L} F)(\gamma_{\mathcal{C}} x) = \gamma_{\mathcal{D}} F(x_c)$. To show this, we recall two already well known results.

\begin{prop}\label{Quillen-adjoint} Let $F \dashv G$ a Quillen adjunction. Then
\begin{enumerate}
    \item $F$ preserves weak equivalences between cofibrant objects;
    \item $G$ preserves weak equivalences between fibrant objects.
\end{enumerate}
\end{prop}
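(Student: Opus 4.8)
The plan is to derive both statements from Ken Brown's Lemma; concretely, I would first establish the auxiliary claim that any functor $F \colon \mathcal{C} \to \mathcal{D}$ between model categories which carries acyclic cofibrations between cofibrant objects to weak equivalences in fact carries \emph{every} weak equivalence between cofibrant objects to a weak equivalence. Granting this, part (1) is immediate: a left Quillen functor preserves cofibrations and acyclic cofibrations, so in particular it sends acyclic cofibrations between cofibrant objects to weak equivalences, and the claim applies. For part (2) I would pass to opposite categories: if $F \dashv G$ then $G^{op} \colon \mathcal{D}^{op} \to \mathcal{C}^{op}$ is left Quillen, the fibrant objects of $\mathcal{D}$ are the cofibrant objects of $\mathcal{D}^{op}$, and the weak equivalences between fibrant objects of $\mathcal{D}$ are precisely the weak equivalences between cofibrant objects of $\mathcal{D}^{op}$, so part (1) applied to $G^{op}$ yields exactly the statement of part (2).

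To prove the claim, let $f \colon A \to B$ be a weak equivalence with $A, B$ cofibrant. First I would note that $A \coprod B$ is cofibrant: the maps $\emptyset \to A$ and $\emptyset \to B$ are cofibrations, so $A \cong A \coprod \emptyset \to A \coprod B$ is a pushout of $\emptyset \to B$, hence a cofibration, and composing with $\emptyset \to A$ shows $\emptyset \to A \coprod B$ is a cofibration. Then I would factor the canonical map $(f, \mathrm{id}_B) \colon A \coprod B \to B$ as a cofibration $q \colon A \coprod B \to C$ followed by an acyclic fibration $p \colon C \to B$; since $A \coprod B$ is cofibrant, so is $C$. Writing $i_A \colon A \to C$ and $i_B \colon B \to C$ for the composites of $q$ with the two coproduct inclusions, these are cofibrations between cofibrant objects with $p \circ i_A = f$ and $p \circ i_B = \mathrm{id}_B$. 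By the two-out-of-three axiom $i_B$ is a weak equivalence (from $p$ and $p \circ i_B = \mathrm{id}_B$), and then $i_A$ is a weak equivalence (from $p$ and $f$); so $i_A$ and $i_B$ are acyclic cofibrations between cofibrant objects.

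Finally I would apply $F$: by hypothesis $F(i_A)$ and $F(i_B)$ are weak equivalences, and from $F(p) \circ F(i_B) = \mathrm{id}_{F(B)}$ together with two-out-of-three, $F(p)$ is a weak equivalence; hence $F(f) = F(p) \circ F(i_A)$ is a composite of weak equivalences, hence a weak equivalence. This proves the claim, and with it both parts of the proposition. I expect the only delicate point to be the bookkeeping — keeping track that every object occurring in the factorization is cofibrant and every map runs between cofibrant objects, so that the hypothesis of Ken Brown's Lemma genuinely applies — since the homotopical content is carried entirely by the two-out-of-three axiom and the defining lifting properties of a Quillen adjunction.
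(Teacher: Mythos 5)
Your proof is correct and is precisely the standard argument: you reprove Ken Brown's Lemma (factoring the fold map $(f,\mathrm{id}_B)\colon A\coprod B\to B$ and using two-out-of-three) and then apply it to the left Quillen functor, with part (2) by duality. The paper does not spell this out but simply cites \cite[Remark 9.8]{Dwyer-Spalinski}, which is exactly this Ken Brown's Lemma argument, so your proof matches the intended one.
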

\begin{proof}
See, for instance, \cite[Remark 9.8]{Dwyer-Spalinski}
\end{proof}

\begin{prop}\label{functor-preserving-weak-equivalences} Let $\mathcal{C}$ be a model category.
\begin{enumerate}
    \item Let $F : \mathcal{C} \rightarrow \mathcal{D}$ a functor with the property that $F(f)$ is an isomorphism whenever $f$ is a weak equivalence between cofibrant objects in $\mathcal{C}$. Then the left derived functor $(LF, t_{\cdot})$ of $F$ exists, and, for each cofibrant object $x$ of $\mathcal{C}$, the map $t_x : LF \cdot \gamma_{\mathcal{C}} (x) \rightarrow F(x)$ is an isomorphism;
    \item let $G : \mathcal{C} \rightarrow \mathcal{D}$ a functor with the property that $G(f)$ is an isomorphism whenever $f$ is a weak equivalence between fibrant objects in $\mathcal{C}$. Then the right derived functor $(RG, s_{\cdot})$ of $G$ exists, and, for each fibrant object $x$ of $\mathcal{C}$, the map $s_x : G(x) \rightarrow RG \cdot \gamma_{\mathcal{C}} (x)$ is an isomorphism.
\end{enumerate}
\end{prop}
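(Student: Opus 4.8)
The plan is to prove (1) directly and then deduce (2) by passing to opposite categories; (1) is essentially Quillen's classical construction of the left derived functor, adapted to this hypothesis. Fix once and for all the functorial cofibrant replacement $(-)_c$ together with its natural weak equivalence $q_\cdot : (-)_c \to \mathrm{id}_{\mathcal{C}}$, which we are assuming to exist. The first step is to build $LF$. If $f : x \to y$ is any weak equivalence in $\mathcal{C}$, then $f_c : x_c \to y_c$ is a weak equivalence between cofibrant objects, by functoriality of $(-)_c$ and the two-out-of-three axiom, so $F(f_c)$ is an isomorphism by hypothesis. Hence the composite $F \circ (-)_c : \mathcal{C} \to \mathcal{D}$ sends every weak equivalence to an isomorphism, and by the universal property of the localization $\gamma_{\mathcal{C}} : \mathcal{C} \to Ho(\mathcal{C})$ it factors, through a unique functor, as $LF \circ \gamma_{\mathcal{C}} = F \circ (-)_c$ with $LF : Ho(\mathcal{C}) \to \mathcal{D}$.

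Next define $t_\cdot : LF \cdot \gamma_{\mathcal{C}} \to F$ by $t_x := F(q_x) : F(x_c) = (LF \cdot \gamma_{\mathcal{C}})(x) \to F(x)$; this is natural in $x$ because $q_\cdot$ is. If $x$ is cofibrant, then $q_x$ is a weak equivalence between cofibrant objects, so $t_x$ is an isomorphism — which is the last assertion of (1), once $(LF, t_\cdot)$ is known to be the left derived functor. To verify the universal property, take any pair $(G : Ho(\mathcal{C}) \to \mathcal{D},\ s_\cdot : G \cdot \gamma_{\mathcal{C}} \to F)$. Since $\gamma_{\mathcal{C}}(q_x)$ is an isomorphism in $Ho(\mathcal{C})$ for every $x$, the map $G(\gamma_{\mathcal{C}}(q_x))$ is invertible, and I set
\[
s'_{\gamma_{\mathcal{C}}(x)} \;:=\; s_{x_c} \circ G\bigl(\gamma_{\mathcal{C}}(q_x)\bigr)^{-1} \;:\; G(\gamma_{\mathcal{C}}x) \longrightarrow F(x_c) = (LF\cdot\gamma_{\mathcal{C}})(x).
\]
Naturality of $s'_\cdot$ follows from naturality of $s_\cdot$ and $q_\cdot$ plus functoriality of $(-)_c$ (it is enough to test against morphisms of the form $\gamma_{\mathcal{C}}(h)$); and the identity $t_x \circ s'_{\gamma_{\mathcal{C}}(x)} = s_x$ reduces, after unwinding, precisely to the naturality square of $s_\cdot$ along $q_x : x_c \to x$. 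Uniqueness, if one wants it, is forced by this same equation on cofibrant objects (where $t$ is invertible) together with the fact that every object of $Ho(\mathcal{C})$ is isomorphic to $\gamma_{\mathcal{C}}(x_c)$ for $x_c$ cofibrant.

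For (2) I would simply apply (1) to $F^{\mathrm{op}} : \mathcal{C}^{\mathrm{op}} \to \mathcal{D}^{\mathrm{op}}$, with the opposite model structure on $\mathcal{C}^{\mathrm{op}}$: its cofibrant objects are the fibrant objects of $\mathcal{C}$, $Ho(\mathcal{C}^{\mathrm{op}})$ is $Ho(\mathcal{C})^{\mathrm{op}}$, and the left derived functor of $F^{\mathrm{op}}$ is exactly the right derived functor of $F$ (with the natural transformation $t_\cdot$ reversed to the required $s_\cdot : F \to RF \cdot \gamma_{\mathcal{C}}$). Translating the conclusion of (1) back through this dictionary gives (2) verbatim, with $s_x$ an isomorphism whenever $x$ is fibrant.

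The main obstacle is bookkeeping rather than conceptual: one must check carefully that $s'_\cdot$ is natural and compatible with $t_\cdot$ while making sure the whole construction uses only the chosen functorial cofibrant replacement, so that no coherence problems with different choices of replacement arise, and one relies on $Ho(\mathcal{C})$ genuinely being the localization $\mathcal{C}[W^{-1}]$ with $\gamma_{\mathcal{C}}$ the canonical functor. Beyond that, everything is the universal property of localization combined with the two-out-of-three axiom.
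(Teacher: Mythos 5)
The paper offers no proof of this proposition: it simply cites Dwyer--Spalinski, Proposition~9.3. Your argument is correct and is the standard construction of the (total) derived functor via the universal property of localization, simplified by the paper's standing assumption of functorial cofibrant and fibrant replacements. Concretely: $F\circ(-)_c$ inverts all weak equivalences by two-out-of-three, so it factors uniquely through $\gamma_{\mathcal{C}}$; the transformation $t_x := F(q_x)$ is natural and invertible on cofibrant $x$; and the factorization $s'_{\gamma_{\mathcal{C}}(x)} := s_{x_c}\circ G(\gamma_{\mathcal{C}}(q_x))^{-1}$ both satisfies $t\cdot s' = s$ (this is exactly the naturality square of $s$ along $q_x$) and is itself natural, since morphisms of $Ho(\mathcal{C})$ are generated under composition and inversion by those in the image of $\gamma_{\mathcal{C}}$. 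The dualization to part~(2) via the opposite model structure is also sound. The only place where your write-up is slightly elliptic is the claim that it ``suffices to test naturality of $s'$ against morphisms of the form $\gamma_{\mathcal{C}}(h)$''; this is true, but you should say explicitly why (naturality is closed under composition and under inversion of components, and $Ho(\mathcal{C})$ is the localization $\mathcal{C}[W^{-1}]$, so its morphisms are generated by $\gamma_{\mathcal{C}}(h)$ and $\gamma_{\mathcal{C}}(w)^{-1}$). Note also that the cited Dwyer--Spalinski proof does not assume functorial replacements and therefore has to work harder to define $LF$ on morphisms and check independence of choices; by leaning on the paper's functoriality assumption your proof is cleaner but correspondingly less general than the cited statement.
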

\begin{proof}
See, for instance, \cite[Proposition 9.3]{Dwyer-Spalinski}
\end{proof}

We can now prove:
\begin{Lemma}\label{computing-left-derived}
Let $\mathcal{C}$ and $\mathcal{D}$ be model categories. 
\begin{enumerate}
    \item If $F : \mathcal{C} \rightarrow \mathcal{D}$ is a left Quillen adjoint functor, then, for every object $y\in\mathcal{C}$, $\mathbf{L}F (\gamma_{\mathcal{C}} y) \xrightarrow{iso} \gamma_{\mathcal{D}} F(y_c)$;
    \item If $G : \mathcal{C} \rightarrow \mathcal{D}$ is a right Quillen adjoint functor, then, for every object $y\in\mathcal{C}$, $\mathbf{R}G (\gamma_{\mathcal{C}} y) \xrightarrow{iso} \gamma_{\mathcal{D}} G(y_f)$.
\end{enumerate}
\end{Lemma}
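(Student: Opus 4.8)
The plan is to deduce both statements directly from Propositions \ref{Quillen-adjoint} and \ref{functor-preserving-weak-equivalences}, carrying out part (1) in detail and obtaining part (2) by a routine dualization. First I would recall that, by definition, $\mathbf{L}F$ is the left derived functor of the composite $\gamma_{\mathcal{D}} \cdot F : \mathcal{C} \rightarrow Ho(\mathcal{D})$. By Proposition \ref{Quillen-adjoint}(1), $F$ sends a weak equivalence between cofibrant objects to a weak equivalence, and $\gamma_{\mathcal{D}}$ then sends the latter to an isomorphism; hence the functor $\gamma_{\mathcal{D}} \cdot F$ satisfies the hypothesis of Proposition \ref{functor-preserving-weak-equivalences}(1). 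Applying that proposition, the left derived functor $(\mathbf{L}F, t_{\cdot})$ exists and, for every cofibrant $x \in \mathcal{C}$, the component $t_x : \mathbf{L}F(\gamma_{\mathcal{C}} x) \rightarrow \gamma_{\mathcal{D}} F(x)$ is an isomorphism.

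Next I would pass from a cofibrant object to an arbitrary object $y$. The (functorial) cofibrant replacement comes equipped with an acyclic fibration $y_c \xrightarrow{\sim} y$, so $\gamma_{\mathcal{C}}$ applied to it is an isomorphism $\gamma_{\mathcal{C}} y_c \xrightarrow{iso} \gamma_{\mathcal{C}} y$ in $Ho(\mathcal{C})$. Applying the functor $\mathbf{L}F$ to this isomorphism and composing with $t_{y_c}$ yields the chain
\[
\mathbf{L}F(\gamma_{\mathcal{C}} y) \xleftarrow{iso} \mathbf{L}F(\gamma_{\mathcal{C}} y_c) \xrightarrow{iso} \gamma_{\mathcal{D}} F(y_c),
\]
which is the asserted canonical isomorphism. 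Canonicity of all the maps involved is guaranteed by the standing assumption that the replacements are functorial assignments.

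For part (2), I would dualize throughout: replace "cofibrant" by "fibrant" and "left" by "right", use Proposition \ref{Quillen-adjoint}(2) to see that $\gamma_{\mathcal{D}} \cdot G$ meets the hypothesis of Proposition \ref{functor-preserving-weak-equivalences}(2), so that $(\mathbf{R}G, s_{\cdot})$ exists with $s_x : \gamma_{\mathcal{D}} G(x) \rightarrow \mathbf{R}G(\gamma_{\mathcal{C}} x)$ an isomorphism for every fibrant $x$, and finally feed in the weak equivalence $y \xrightarrow{\sim} y_f$ coming from the functorial fibrant replacement to get $\mathbf{R}G(\gamma_{\mathcal{C}} y) \xrightarrow{iso} \gamma_{\mathcal{D}} G(y_f)$.

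I do not anticipate a genuine obstacle here: the statement is essentially a packaging of the two cited propositions. The only point needing a little care is bookkeeping the directions of the replacement weak equivalences ($y_c \to y$ versus $y \to y_f$) and checking that every isomorphism produced is the canonical one, which is precisely where the functoriality of the cofibrant and fibrant replacements is used.
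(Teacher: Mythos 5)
Your proposal is correct and follows essentially the same route as the paper: invoke Proposition \ref{Quillen-adjoint} to see that $\gamma_{\mathcal{D}} \cdot F$ satisfies the hypothesis of Proposition \ref{functor-preserving-weak-equivalences}, obtain $(\mathbf{L}F, t_\cdot)$ with $t_x$ an isomorphism for cofibrant $x$, and then transport along the cofibrant replacement $y_c \to y$. The paper merely presents the last step as a naturality square rather than as your explicit two-arrow chain, which is only a cosmetic difference.
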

\begin{proof}
\begin{enumerate}
    \item By Proposition \ref{Quillen-adjoint}, we know that $F$ preserves weak equivalences between cofibrant objects. Therefore, $\gamma_{\mathcal{D}}\cdot F : \mathcal{C} \rightarrow Ho(\mathcal{D})$ sends weak equivalences between cofibrant objects to isomorphisms (since $\gamma_{\mathcal{D}}$ sends weak equivalences to isomorphisms) and, by applying Proposition \ref{functor-preserving-weak-equivalences} to $ \gamma_{\mathcal{D}}\cdot F$, we obtain that $L( \gamma_{\mathcal{D}}\cdot F) = \mathbf{L}F$ exists.

Let $y \in \mathcal{C}$, and let $e: y_c \rightarrow y$ be a weak equivalence, and consider the diagram
\begin{equation*}
    \xymatrix{ \mathbf{L}F \cdot \gamma_{c} (y_c) \ar[r]^{\mathbf{L}F \cdot \gamma_{\mathcal{C}} (e)}\ar[d]_{t_{y_c}} & \mathbf{L}F \cdot \gamma_{\mathcal{C}} (y)\ar[d]^{t_y} \\
    \gamma_{\mathcal{D}}\cdot F(y_c) \ar[r]^{\gamma_{\mathcal{D}}\cdot F(e)} & \gamma_{\mathcal{D}}\cdot F(y)}
\end{equation*}
$t_{y_c}$ is an isomorphism by Proposition \ref{functor-preserving-weak-equivalences}, and $\mathbf{L}F \cdot \gamma_{\mathcal{C}} (e)$ is an isomorphism.
Therefore, $\gamma_{\mathcal{D}}\cdot F(y_c)$ and $\mathbf{L}F \cdot \gamma_{\mathcal{C}} (y)$ are isomorphic.
\item Using the dual argument to Point (1). \qedhere
\end{enumerate}
\end{proof}

From now on, we will make use of the isomorphism $(\gamma_{\mathcal{D}}\cdot F(e))^{-1} \cdot t_y$ to compute the object $\mathbf{L}F \cdot \gamma_{\mathcal{C}} (y)$; we will therefore write $\mathbf{L}F \cdot \gamma_{\mathcal{C}} (y) = \gamma_{\mathcal{D}}\cdot F(y_c)$. 

\subsection{Homotopy weighted colimits}

\begin{Def}
A functor $F$ between two model categories $\mathcal{C}$ and $\mathcal{D}$ is called homotopical if it preserves weak equivalences. When $\mathcal{C}$ and $\mathcal{D}$ are $\mathcal{V}$-model categories, we also require that a homotopical functor is $\mathcal{V}$-enriched.
\end{Def}

\begin{Rem}
Given a homotopical functor $F: \mathcal{C} \rightarrow \mathcal{D}$, there exists a functor $Ho(F) : Ho(\mathcal{C}) \rightarrow Ho(\mathcal{D})$ such that the following diagram commutes: 
\begin{equation*}
    \xymatrix{\mathcal{C}\ar[d]^{\gamma_{\mathcal{C}}} \ar[r]^F & \mathcal{D} \ar[d]^{\gamma_{\mathcal{D}}} \\
    Ho(\mathcal{C}) \ar[r]^{Ho(F)} & Ho(\mathcal{D})}
\end{equation*}
\end{Rem}

\begin{Def}
Let $\mathcal{C}$ be a $\mathcal{V}$-enriched category, and let $\mathcal{D}$ be a small category. We define the functor $- \underset{\mathcal{D}}{\otimes} - : \mathcal{V}^{\mathcal{D}^{op}} \times \mathcal{C}^{\mathcal{D}} \rightarrow \mathcal{C}$ as the functor sending the pair $(W, F)$ to $W \underset{\mathcal{D}}{\otimes} F$.
\end{Def}

\begin{Rem}
$- \underset{\mathcal{D}}{\otimes} -$ is a left Quillen bifunctor (see \cite[Definition A.3.1.1]{Lurie} for the definition of left Quillen bifunctor, and \cite[Remark A.2.9.27]{Lurie} for an explanation of why $- \underset{\mathcal{D}}{\otimes} -$ is a left Quillen bifunctor). This allows us to give the following definition.
\end{Rem}

\begin{Def}\label{hwcolim}
Given a $\mathcal{V}$-model category $\mathcal{C}$ and a small category $\mathcal{D}$, the homotopy weighted colimit functor, denoted by $- \underset{\mathcal{D}}{\otimes^h} -$, is defined to be the total left derived functor of the left Quillen bifunctor $- \underset{\mathcal{D}}{\otimes} - $.
\end{Def}

\begin{Rem}
By applying Proposition \ref{computing-left-derived} to the functor $- \underset{\mathcal{D}}{\otimes} -$, we find a practical way to compute homotopy weighted colimits:
\begin{equation}\label{hwcolimits}
 W \underset{\mathcal{D}}{\otimes^h} F =  \gamma_{\mathcal{C}} (W_c \underset{\mathcal{D}}{\otimes} F_c)
\end{equation} 
In fact, as explained in \cite{vokvrinek}, to compute $ W \underset{\mathcal{D}}{\otimes^h} F$, in Equation \ref{hwcolimits} it would be enough to consider the pointwise cofibrant replacement of the diagram $F$ insted of $F_c$.
\end{Rem}

\begin{Def}\label{def-preserve-hcolim}
A homotopical functor $G$ between two $\mathcal{V}$-model categories $\mathcal{C}$ and $\mathcal{E}$ preserves homotopy weighted colimits if, for any pair $(W,F) \in \mathcal{V}^{\mathcal{D}^{op}} \times \mathcal{C}^{\mathcal{D}}$, the map induced in $Ho(\mathcal{C})$ by the zig zag of maps in $\mathcal{C}$ 
\begin{equation}\label{hcolim-map}
W_c \underset{\mathcal{D}}{\otimes} (GF)_c \xleftarrow{\alpha} W_c \underset{\mathcal{D}}{\otimes} (G(F_c))_c \xrightarrow{\beta} W_c \underset{\mathcal{D}}{\otimes} G(F_c) \xrightarrow{\delta} G(W_c \underset{\mathcal{D}}{\otimes} F_c )
\end{equation} 
is an isomorphism (arrows $\alpha$ and $\gamma$ are induced by the factorization through the cofibrant replacement, while $\delta$ is given by the universal property of weighted colimits), or equivalently, that all the maps in \ref{hcolim-map} are weak equivalences.
\end{Def}

\begin{Def}
We say that a left (resp.\ right) total derived functor preserves homotopy weighted colimits if its left (resp.\ right) approximation (as defined in \cite[Paragraph I.III.15]{Approximations}) does so.
\end{Def}

We end the section giving two more definitions involving homotopy weighted colimits, that will be used in the rest of the paper.

\begin{Def}\label{generates}
A full $\mathcal{V}$-enriched subcategory $\mathcal{C}'$ of a $\mathcal{V}$-model category $\mathcal{C}$ is a closed subcategory if it is closed under weak equivalences and weighted homotopy colimits, meaning that every object weakly equivalent to a homotopy weighted colimit of objects in $\mathcal{C}'$ belongs to $\mathcal{C}'$ too.
\end{Def}
\begin{Def}\label{generates2}
Let $\mathcal{C}_0$ be a full $\mathcal{V}$-enriched subcategory of a $\mathcal{V}$-model category $\mathcal{C}$.
The subcategory of $\mathcal{C}$ generated by $\mathcal{C}_0$ is the smallest closed subcategory of $\mathcal{C}$ containing $\mathcal{C}_0$. We write $\langle\mathcal{C}_0\rangle$ for the category generated by $\mathcal{C}_0$. We say that $\mathcal{C}_0$ generates $\mathcal{C}$ if $\langle \mathcal{C}_0 \rangle = \mathcal{C}$. 
\end{Def}

\section{Results} \label{results}

In this whole section, unless otherwise specified, we will assume that $\mathcal{C}$ is a $\mathcal{V}$-model category.

\begin{prop}\label{adjunction}
Let $\mathcal{C}_0$ be a small, full subcategory of $\mathcal{C}$, and suppose that every object of $\mathcal{C}_0$ is cofibrant. Then, the functor 
\begin{equation*}
Hom(\mathcal{C}_0 , -) : \mathcal{C} \rightarrow \mathcal{V}^{\mathcal{C}_0^{op}}; \ \ 
y \longmapsto \mathcal{C}(- , y)
\end{equation*}
given by the postcomposition of the Yoneda embedding with the inclusion functor of $\mathcal{C}_0$ into $\mathcal{C}$, is the right adjoint of a Quillen adjunction.
\end{prop}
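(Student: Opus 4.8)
The plan is to exhibit the left adjoint explicitly and then verify the Quillen condition on the right adjoint $Hom(\mathcal{C}_0,-)$, which is the convenient half here because the fibrations and the acyclic fibrations of the projective model structure on $\mathcal{V}^{\mathcal{C}_0^{op}}$ are detected objectwise in $\mathcal{V}$.

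First I would take as candidate left adjoint the weighted colimit functor
\[
L \colon \mathcal{V}^{\mathcal{C}_0^{op}} \longrightarrow \mathcal{C}, \qquad W \longmapsto W \underset{\mathcal{C}_0}{\otimes} j,
\]
where $j \colon \mathcal{C}_0 \hookrightarrow \mathcal{C}$ is the inclusion, regarded as an object of $\mathcal{C}^{\mathcal{C}_0}$. This is well defined because $\mathcal{C}$ is cocomplete and tensored over $\mathcal{V}$ and $\mathcal{C}_0$ is small, so the defining coend exists. The adjunction $L \dashv Hom(\mathcal{C}_0,-)$ is then nothing but the universal property of the weighted colimit: there is a $\mathcal{V}$-natural isomorphism
\[
\mathcal{C}\Bigl(W \underset{\mathcal{C}_0}{\otimes} j,\ y\Bigr) \xrightarrow{iso} \mathcal{V}^{\mathcal{C}_0^{op}}\bigl(W,\ \mathcal{C}(j(-),y)\bigr),
\]
and, since $\mathcal{C}(j(-),y)$ is exactly $Hom(\mathcal{C}_0,-)(y)$, applying $\mathcal{V}(\mathbb{I},-)$ to both sides yields the corresponding isomorphism of ordinary hom-sets, i.e.\ the ordinary adjunction with $Hom(\mathcal{C}_0,-)$ as right adjoint.

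To check that $Hom(\mathcal{C}_0,-)$ is right Quillen it then suffices, by the objectwise description of (acyclic) fibrations in $\mathcal{V}^{\mathcal{C}_0^{op}}$, to see that each representable $\mathcal{C}(c,-)\colon\mathcal{C}\to\mathcal{V}$, for $c\in\mathcal{C}_0$, carries fibrations to fibrations and acyclic fibrations to acyclic fibrations. Given a fibration $g\colon y\to y'$ in $\mathcal{C}$, I would apply the defining axiom of the $\mathcal{V}$-model category $\mathcal{C}$ (Definition \ref{SP}) to the cofibration $\emptyset_\mathcal{C}\to c$ — which is a cofibration precisely because $c$ is cofibrant — obtaining that
\[
\mathcal{C}(c,y) \longrightarrow \mathcal{C}(\emptyset_\mathcal{C},y) \times_{\mathcal{C}(\emptyset_\mathcal{C},y')} \mathcal{C}(c,y')
\]
is a fibration, acyclic if $g$ is. Finally, using the tensor--hom adjunction, $\mathcal{V}(v,\mathcal{C}(\emptyset_\mathcal{C},z))$ is naturally the set of morphisms $v\otimes\emptyset_\mathcal{C}\to z$ in $\mathcal{C}$; since $v\otimes-$ preserves colimits, $v\otimes\emptyset_\mathcal{C}\cong\emptyset_\mathcal{C}$, so that set is a singleton for every $v$ and $z$. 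Hence $\mathcal{C}(\emptyset_\mathcal{C},z)$ is terminal in $\mathcal{V}$, the pullback above collapses to $\mathcal{C}(c,y')$, and the displayed map is just $\mathcal{C}(c,g)$. Thus $\mathcal{C}(c,g)$ is an (acyclic) fibration, so $Hom(\mathcal{C}_0,-)$ is right Quillen and $L\dashv Hom(\mathcal{C}_0,-)$ is a Quillen adjunction.

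The main obstacle is organizational rather than conceptual: one must make sure the weighted-colimit adjunction is genuinely $\mathcal{V}$-enriched, so that it really does descend to an adjunction of underlying ordinary categories (on which the model structures live), and one must invoke Definition \ref{SP} in the correct variance — it is essential to test $Hom(\mathcal{C}_0,-)$ on fibrations rather than to test $L$ on cofibrations, because projective cofibrations in $\mathcal{V}^{\mathcal{C}_0^{op}}$ are not objectwise and would be awkward to control along $L$.
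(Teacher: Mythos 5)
Your proof is correct and follows essentially the same route as the paper's: exhibit the left adjoint as the weighted colimit $W \mapsto W \underset{\mathcal{C}_0}{\otimes} j$, then verify that $Hom(\mathcal{C}_0,-)$ is right Quillen by applying the enriched model-category axiom (Definition \ref{SP}) to the cofibration $\emptyset_{\mathcal{C}} \to c$ and observing that $\mathcal{C}(\emptyset_{\mathcal{C}}, -)$ is terminal, so the pullback collapses. Your justification for why $\mathcal{C}(\emptyset_{\mathcal{C}}, z)$ is terminal (via tensoring and cocontinuity of $v \otimes -$) is actually more explicit than the paper's, which asserts the corresponding pullback square without spelling this out.
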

\begin{proof}
Called $J : \mathcal{C}_0 \rightarrow \mathcal{C}$ the inclusion functor of $\mathcal{C}_0$ into $\mathcal{C}$, then the left adjoint to $Hom(\mathcal{C}_0 , -)$ is given by the functor
\begin{equation*}
    - \underset{\mathcal{C}_0}{\otimes} J : \mathcal{V}^{\mathcal{C}_0^{op}}\rightarrow \mathcal{C}
\end{equation*}
which sends a presheaf $F$ to $F \underset{\mathcal{C}_0}{\otimes} J$ (see \cite[Theorem 4.50]{Kelly}).

Moreover, the functor $Hom(\mathcal{C}_0 , -)$ preserves fibrations and acyclic fibrations. Indeed, given a (acyclic) fibration $f: a \rightarrow b$ in $\mathcal{C}$, we can prove that 
\begin{equation*}
Hom(\mathcal{C}_0 , a) \xrightarrow{Hom(\mathcal{C}_0 , f)} Hom(\mathcal{C}_0 , b)    
\end{equation*}
is a (acyclic) fibration too. Recall that this last map is a (acyclic) fibration in $\mathcal{V}^{\mathcal{C}_0^{op}}$ if and only if it is objectwise a (acyclic) fibration in $\mathcal{V}$; hence our problem reduces to show that, for any object $d\in\mathcal{C}_0$, the map \begin{equation}\label{fibr}
f\cdot - : \mathcal{C}(d, a) \rightarrow \mathcal{C} (d, b)    
\end{equation}
is a (acyclic) fibration.

By hypothesis, the map $g : \emptyset \rightarrow d$ is a cofibration. 
Moreover, the diagram
\begin{equation*}
\xymatrix{\mathcal{C}(d, b)\ar[d] \ar[r]^{id} & \mathcal{C}(d , b) \ar[d]^{- \cdot g} \\
        \mathcal{C}(\emptyset , a) \ar[r]_{f\cdot -} & \mathcal{C}(\emptyset , b) }
\end{equation*}
is a pullback diagram, which implies, by Definition \ref{SP}, that the map \ref{fibr} is a (acyclic) fibration.
\end{proof}

From a Quillen adjunction between model categories one can always derive an adjunction between the corresponding homotopy categories, by taking the left (resp.\ right) total derived functor of the left (resp.\ right) adjoint (see \cite[Lemma 1.3.10]{Hovey}). This new adjunction is called derived adjunction. In the specific case of Proposition \ref{adjunction}, the derived adjunction will be:
\begin{equation}\label{derived-adjunction}
\mathbf{L}( - \underset{\mathcal{C}_0}{\otimes} J) : Ho(\mathcal{V}^{\mathcal{C}_0^{op}}) \rightleftarrows Ho(\mathcal{C}) : \mathbf{R}Hom(\mathcal{C}_0 , -)
\end{equation}
The left adjoint functor of \ref{derived-adjunction} has the following property:

\begin{prop} \label{L(- x J)-preserves-hcolim}
Let $\mathbf{L}( - \underset{\mathcal{C}_0}{\otimes} J)$ be as in \ref{derived-adjunction}. For any small category $\mathcal{D}$ and for any pair of functors $W : \mathcal{D}^{op} \rightarrow \mathcal{V}$ and $F : \mathcal{D} \rightarrow \mathcal{V}^{\mathcal{C}_0^{op}}$,
\begin{equation*}
\mathbf{L} (- \underset{\mathcal{C}_0}{\otimes} J) (W \underset{\mathcal{D}}{\otimes^h} F ) \xrightarrow{iso} W \underset{\mathcal{D}}{\otimes^h} (\mathbf{L} (- \underset{\mathcal{C}_0}{\otimes} J) (F))    
\end{equation*}
i.e.\ $\mathbf{L}(- \underset{\mathcal{C_0}}{\otimes} J)$ preserves homotopy weighted colimits in the entrance corresponding to the weight.
\end{prop}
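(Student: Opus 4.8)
The plan is to reduce the statement to the elementary fact that $T \coloneqq -\underset{\mathcal{C}_0}{\otimes} J : \mathcal{V}^{\mathcal{C}_0^{op}} \rightarrow \mathcal{C}$ (the left adjoint of Proposition \ref{adjunction}, with right adjoint $Hom(\mathcal{C}_0,-)$) preserves ordinary, non-homotopical, weighted colimits, and then to transport this through the explicit formula for homotopy weighted colimits (Equation \ref{hwcolimits}) and through Lemma \ref{computing-left-derived}, which describes total left derived functors of left Quillen adjoints.

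\emph{Step 1 (strict preservation).} As the left $\mathcal{V}$-adjoint extending $J$ along the Yoneda embedding, $T$ preserves all weighted colimits (\cite[Theorem 4.50]{Kelly}): for every small category $\mathcal{D}$, every $W : \mathcal{D}^{op}\rightarrow\mathcal{V}$ and every $G : \mathcal{D}\rightarrow\mathcal{V}^{\mathcal{C}_0^{op}}$, there is a canonical isomorphism, natural in $W$ and $G$,
\[
  T\Bigl(W\underset{\mathcal{D}}{\otimes} G\Bigr)\;\xrightarrow{iso}\;W\underset{\mathcal{D}}{\otimes}(T\circ G).
\]

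\emph{Step 2 (the two computations).} Fix $W$ and $F$ as in the statement and choose functorial cofibrant replacements $W_c\xrightarrow{\sim}W$ in the projective model structure of $\mathcal{V}^{\mathcal{D}^{op}}$ and $F_c\xrightarrow{\sim}F$ in the projective model structure of $(\mathcal{V}^{\mathcal{C}_0^{op}})^{\mathcal{D}}$; in particular $F_c$ is pointwise cofibrant. Since $-\underset{\mathcal{D}}{\otimes}-$ is a left Quillen bifunctor, $W_c\underset{\mathcal{D}}{\otimes}F_c$ is cofibrant in $\mathcal{V}^{\mathcal{C}_0^{op}}$, and hence, combining Equation \ref{hwcolimits}, Lemma \ref{computing-left-derived}(1) and Step 1,
\[
  \mathbf{L}T\Bigl(W\underset{\mathcal{D}}{\otimes^h}F\Bigr)\;=\;\gamma_{\mathcal{C}}\,T\Bigl(W_c\underset{\mathcal{D}}{\otimes}F_c\Bigr)\;\xrightarrow{iso}\;\gamma_{\mathcal{C}}\Bigl(W_c\underset{\mathcal{D}}{\otimes}(T\circ F_c)\Bigr).
\]
On the other hand, each $F_c d$ is a cofibrant replacement of $Fd$, so by Lemma \ref{computing-left-derived}(1) the diagram $\gamma_{\mathcal{C}}\circ T\circ F_c:\mathcal{D}\rightarrow Ho(\mathcal{C})$ represents $\mathbf{L}T(F)$; moreover $T\circ F_c$ is pointwise cofibrant since $T$, being left Quillen, preserves cofibrant objects, so by the remark following Equation \ref{hwcolimits},
\[
  W\underset{\mathcal{D}}{\otimes^h}\bigl(\mathbf{L}T(F)\bigr)\;=\;W\underset{\mathcal{D}}{\otimes^h}(T\circ F_c)\;=\;\gamma_{\mathcal{C}}\Bigl(W_c\underset{\mathcal{D}}{\otimes}(T\circ F_c)\Bigr).
\]
Comparing the two displays yields the asserted isomorphism.

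\emph{The main obstacle} is the careful handling of cofibrant replacements: one must check that $W_c\underset{\mathcal{D}}{\otimes}F_c$ is \emph{already} cofibrant — so that $\mathbf{L}T$ can be evaluated on it directly, without passing through a further cofibrant replacement — and that $T$ carries pointwise-cofibrant diagrams to pointwise-cofibrant diagrams, so that the right-hand side collapses in exactly the same way; both are consequences of the left Quillen bifunctor property of $-\underset{\mathcal{D}}{\otimes}-$ and of $T$ being left Quillen. One should also track that each comparison map invoked — the isomorphism of Step 1 and the identifications furnished by Equation \ref{hwcolimits} and Lemma \ref{computing-left-derived} — is the canonical one, so that the resulting isomorphism is genuinely natural in $W$ and $F$ rather than merely an abstract one. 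Conceptually the statement is nothing but the compatibility of total left derived functors with composition, applied to the identity $T\circ(-\underset{\mathcal{D}}{\otimes}-)=(-\underset{\mathcal{D}}{\otimes}-)\circ(\mathrm{id}\times T)$ of left Quillen (bi)functors.
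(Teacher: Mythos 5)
Your proposal is correct and takes essentially the same route as the paper: both reduce the derived statement to the strict Fubini-type interchange $T(W_c \underset{\mathcal{D}}{\otimes} F_c) \cong W_c \underset{\mathcal{D}}{\otimes} (T \circ F_c)$ — the paper invokes the Fubini theorem for coends directly, while you phrase it as preservation of weighted colimits by the left adjoint $T$, which amounts to the same thing. Your version spells out the cofibrant-replacement bookkeeping (that $W_c \underset{\mathcal{D}}{\otimes} F_c$ is already cofibrant via the left Quillen bifunctor property, and that $T$ preserves pointwise cofibrancy) which the paper leaves implicit.
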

\begin{proof}
It is enough to show that the objects $(W_c \underset{\mathcal{D}}{\otimes} F_c) \underset{\mathcal{C}_0}{\otimes J_c}$ and $W_c \underset{\mathcal{D}}{\otimes} (F_c \underset{\mathcal{C_0}}{\otimes} J_c)$ are isomorphic (and therefore weakly equivalent). This is given by the Fubini Theorem for coends (see \cite[Theorem 1.3.1]{Loregian}).
\end{proof}

We now define homotopy tiny objects, who will play a key role in the rest of the section.

\begin{Def}\label{homotopy-tiny}
An object $x$ of $\mathcal{C}$ is said to be homotopy tiny if the total right derived functor $\mathbf{R}\mathcal{C}(x, -) : Ho(\mathcal{C}) \rightarrow Ho(\mathcal{V})$ of the functor 
\begin{equation*}
\mathcal{C}(x, -) : \mathcal{C} \rightarrow \mathcal{V} ; \ \
y \mapsto \mathcal{C} (x, y)
\end{equation*}
preserves homotopy weighted colimits.
\end{Def}
\begin{Rem}
If $y,z$ are fibrant objects of $\mathcal{C}$, $x$ is a cofibrant object of $\mathcal{C}$ and $y \xrightarrow{e} z$ is a weak equivalence in the model structure of $\mathcal{C}$, then the map $\mathcal{C}(x, y) \xrightarrow{e\cdot -} \mathcal{C}(x, z)$ is a weak equivalence. This ensures that the right approximation of $\mathbf{R}\mathcal{C} (x, -)$ is a homotopical functor, and thus it allows us to give Definition \ref{homotopy-tiny}.
\end{Rem}
\begin{Rem}\label{compute-hwcolim-preservation}
Since the approximation of $\mathbf{R} \mathcal{C}(x , -)$ is the functor $\mathcal{C}(x_c , (-)_f) : \mathcal{C} \rightarrow \mathcal{V}$, ``$\mathbf{R} \mathcal{C}(x , -)$ preserves homotopy weighted colimits'' means that, given a small category $\mathcal{D}$ and functors $F: \mathcal{D} \rightarrow \mathcal{C}$, $W: \mathcal{D}^{op} \rightarrow \mathcal{V}$, every map in the zig zag
\begin{equation}\label{htiny-in-model-ct}
    \begin{tikzcd}[column sep={9em,between origins}, row sep={3em,between origins}]
  W_c \underset{\mathcal{D}}{\otimes} (\mathcal{C}(x_c , (-)_f) \cdot F)_c & &  W_c \underset{\mathcal{D}}{\otimes} (\mathcal{C}(x_c , (-)_f) \cdot F_c ) \ar[dr] & \\
  & W_c \underset{\mathcal{D}}{\otimes} (\mathcal{C}(x_c , (-)_f) \cdot F_c)_c \ar[ul]\ar[ur] & & \mathcal{C} (x_c , (W_c \underset{\mathcal{D}}{\otimes} F_c)_f)
\end{tikzcd}
\end{equation}
is a weak equivalence.
\end{Rem}

If we assume that every object of $\mathcal{C}_0$ is homotopy tiny, then adjunction \ref{derived-adjunction} has further interesting properties, as the next two results show.

\begin{prop}\label{Hom(C_0, -)-preserves-hcolim}
Let $\mathcal{C}_0$ be a full subcategory of $\mathcal{C}$, and suppose that every object of $\mathcal{C}_0$ is homotopy tiny. Consider the functor $Hom(\mathcal{C}_0, -)$ defined as in Proposition \ref{adjunction}. Then $\mathbf{R} Hom(\mathcal{C}_0 , -)$ preserves homotopy weighted colimits.
\end{prop}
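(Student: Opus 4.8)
The plan is to reduce the statement ``$\mathbf{R}\,Hom(\mathcal{C}_0,-)$ preserves homotopy weighted colimits'' to the corresponding statement for each of the representable components $\mathcal{C}(d,-)$ with $d\in\mathcal{C}_0$, which is exactly the hypothesis that every object of $\mathcal{C}_0$ is homotopy tiny. The key point is that weak equivalences, fibrations and (hence) the projective model structure on $\mathcal{V}^{\mathcal{C}_0^{op}}$ are all detected objectwise, and the same is true of homotopy weighted colimits in a functor category: the coend defining $W\underset{\mathcal{D}}{\otimes}F$ for $F\colon\mathcal{D}\to\mathcal{V}^{\mathcal{C}_0^{op}}$ is computed pointwise at each $d\in\mathcal{C}_0$. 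So, at the level of the explicit zig-zag of Remark \ref{compute-hwcolim-preservation}, evaluating at $d$ turns the assertion for $Hom(\mathcal{C}_0,-)$ into the assertion for the functor $\mathcal{C}(d,-)\colon\mathcal{C}\to\mathcal{V}$.

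Concretely, I would first recall that the right approximation of $\mathbf{R}\,Hom(\mathcal{C}_0,-)$ is the homotopical functor $y\mapsto Hom(\mathcal{C}_0,y_f)=\mathcal{C}(-,y_f)$, using Proposition \ref{adjunction} (so that $Hom(\mathcal{C}_0,-)$ is right Quillen, hence its derived functor is computed on fibrant replacements as in Lemma \ref{computing-left-derived}) together with the Remark following Definition \ref{homotopy-tiny} applied componentwise. Next, fix a small category $\mathcal{D}$, functors $F\colon\mathcal{D}\to\mathcal{C}$ and $W\colon\mathcal{D}^{op}\to\mathcal{V}$, and write down the zig-zag of maps in $\mathcal{V}^{\mathcal{C}_0^{op}}$ analogous to \eqref{htiny-in-model-ct} that must be shown to consist of weak equivalences; its last leg is the comparison map
\begin{equation*}
W_c\underset{\mathcal{D}}{\otimes}\bigl(\mathcal{C}(-,(F_c)_f)\bigr)\longrightarrow \mathcal{C}\bigl(-,(W_c\underset{\mathcal{D}}{\otimes}F_c)_f\bigr)
\end{equation*}
together with the two legs coming from cofibrant replacement of the diagram $\mathcal{C}(-,(F_c)_f)$ in $\mathcal{V}^{\mathcal{C}_0^{op}}$. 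Then I would evaluate the whole zig-zag at an arbitrary object $d\in\mathcal{C}_0$. Since weak equivalences in $\mathcal{V}^{\mathcal{C}_0^{op}}$ are objectwise, and since the coend $W_c\underset{\mathcal{D}}{\otimes}(-)$ and the cofibrant-replacement functor both commute with evaluation at $d$ (the latter because projective cofibrant replacement is built from objectwise data and evaluation at $d$ is a left adjoint), the zig-zag evaluated at $d$ is precisely the zig-zag \eqref{htiny-in-model-ct} for the functor $\mathcal{C}(d,-)\colon\mathcal{C}\to\mathcal{V}$ applied to the same $(W,F)$ — where here $d=d_c$ since every object of $\mathcal{C}_0$ is cofibrant. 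By hypothesis $d$ is homotopy tiny, so by Remark \ref{compute-hwcolim-preservation} each map in that zig-zag is a weak equivalence in $\mathcal{V}$. As $d$ was arbitrary, each map in the original zig-zag is an objectwise weak equivalence, hence a weak equivalence in $\mathcal{V}^{\mathcal{C}_0^{op}}$, which is what we had to prove.

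The main obstacle is the bookkeeping in the claim that ``evaluation at $d$ commutes with everything in sight'': one needs to check carefully that the (functorial) projective cofibrant replacement of a $\mathcal{D}$-diagram in $\mathcal{V}^{\mathcal{C}_0^{op}}$ can be computed objectwise, i.e.\ that $(\,\text{cofibrant replacement of }\mathcal{C}(-,(F_c)_f)\,)$ evaluated at $d$ agrees, up to the relevant weak equivalences, with a cofibrant replacement in $\mathcal{V}$ of the $\mathcal{D}$-diagram $\mathcal{C}(d,(F_c)_f)$ — and similarly that $(W_c\underset{\mathcal{D}}{\otimes}F_c)_f$ interacts correctly with the Yoneda/evaluation isomorphism $\mathcal{V}^{\mathcal{C}_0^{op}}(-,G)$-style identities used to identify $Hom(\mathcal{C}_0,z)(d)$ with $\mathcal{C}(d,z)$. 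Once this compatibility is recorded, the proof is essentially a componentwise translation of Definition \ref{homotopy-tiny}, and no further input is needed beyond the homotopy-tininess hypothesis.
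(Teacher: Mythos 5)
Your proof takes essentially the same route as the paper's: reduce the assertion for $Hom(\mathcal{C}_0,-)$ to the objectwise assertion at each $d\in\mathcal{C}_0$, using that the projective model structure detects weak equivalences pointwise and that coends valued in $\mathcal{V}^{\mathcal{C}_0^{op}}$ are computed pointwise, then invoke the homotopy-tininess hypothesis via Remark \ref{compute-hwcolim-preservation}. The paper's proof simply asserts this reduction and moves on; you are more explicit about the step where something could conceivably go wrong.

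The one point worth flagging is the ``obstacle'' you identify: projective cofibrant replacement in $(\mathcal{V}^{\mathcal{C}_0^{op}})^{\mathcal{D}}$ does \emph{not} in general commute with evaluation at $d$ (evaluation is right Quillen for the projective structure, not left Quillen, so it need not preserve cofibrancy). Consequently, the evaluated zig-zag is not literally the zig-zag \eqref{htiny-in-model-ct} at $d$. However, this gap is harmless, and the argument can be made airtight without it: the legs of the zig-zag coming from cofibrant replacement ($\alpha$ and $\beta$ in \eqref{hcolim-map}) are weak equivalences for purely formal reasons (independent of the homotopy-tininess hypothesis), and the only leg that actually carries content, the comparison map $\delta\colon W_c\underset{\mathcal{D}}{\otimes}\mathcal{C}((-)_c,(F_c)_f)\to\mathcal{C}((-)_c,(W_c\underset{\mathcal{D}}{\otimes}F_c)_f)$, \emph{does} commute on the nose with evaluation at $d$, since the weighted colimit in the presheaf category is formed objectwise. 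So once you know that $\delta$ evaluated at $d$ is exactly the $\delta$-leg of the zig-zag for $\mathcal{C}(d,-)$, homotopy tininess of $d$ finishes the job, and you never need to commute cofibrant replacement past evaluation. Recording that observation would close the gap you flagged and tighten the argument relative to what the paper actually wrote.
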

\begin{proof}
The approximation of $\mathbf{R} Hom(\mathcal{C}_0 , -)$ is the functor $\mathcal{C}((-)_c , (-)_f) : \mathcal{C} \rightarrow \mathcal{V}^{\mathcal{C}_0^{op}}$. Hence, we need to show that every map of the zig zag
\begin{equation*}
    \begin{tikzcd}[column sep={9em,between origins}, row sep={3em,between origins}]
    W_c \underset{\mathcal{D}}{\otimes} (\mathcal{C}((-)_c , (-)_f) \cdot F)_c & &  W_c \underset{\mathcal{D}}{\otimes} (\mathcal{C}((-)_c , (-)_f) \cdot F_c ) \ar[dr] & \\
  & W_c \underset{\mathcal{D}}{\otimes} (\mathcal{C}((-)_c , (-)_f) \cdot F_c)_c \ar[ul]\ar[ur] & & \mathcal{C} ((-)_c , (W_c \underset{\mathcal{D}}{\otimes} F_c)_f)
\end{tikzcd}
\end{equation*}
is a weak equivalence.
Since, in the projective model structure, weak equivalences are the objectwise ones, it is equivalent to show that, for any $x\in \mathcal{C}_0$, every map in the zig zag \ref{htiny-in-model-ct} is a weak equivalence. This follows from the fact that every object of $\mathcal{C}_0$ is homotopy tiny, and in view of Remark \ref{compute-hwcolim-preservation}.
\end{proof}

\begin{Lemma}\label{unit+counit}
Let $\mathcal{C}_0$ be a small, full subcategory of $\mathcal{C}$ whose objects are fibrant, cofibrant and homotopy tiny. Then the adjunction \ref{derived-adjunction} has:
\begin{enumerate}
    \item unit $\eta_{.}$ which is an isomorphism for every representable functor $\mathcal{C}(- , y)$ with $y\in\mathcal{C}_0$;
    \item counit $\epsilon_{.}$ which is an isomorphism for every object $x\in\mathcal{C}_0$.
    \end{enumerate}
\end{Lemma}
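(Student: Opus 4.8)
The plan is to make both the derived unit and the derived counit completely explicit, using Lemma~\ref{computing-left-derived} to replace the total derived functors by their naive formulas, and then to recognise the resulting comparison maps as instances of the co-Yoneda (density) isomorphism. Write $L \coloneqq -\underset{\mathcal{C}_0}{\otimes}J$ and $R \coloneqq Hom(\mathcal{C}_0,-)$, so that by Proposition~\ref{adjunction} the pair $L\dashv R$ is a Quillen adjunction and \ref{derived-adjunction} is its derived adjunction $\mathbf{L}L\dashv\mathbf{R}R$. Recall that for such a derived adjunction the component of the unit at a cofibrant object $M$ is the image in $Ho(\mathcal{V}^{\mathcal{C}_0^{op}})$ of the composite $M \xrightarrow{\eta^{\mathrm{ord}}_M} R(LM) \to R\big((LM)_f\big)$ (the second arrow being $R$ applied to a fibrant replacement of $LM$), and dually the component of the counit at a fibrant object $Z$ is the image in $Ho(\mathcal{C})$ of the composite $L\big((RZ)_c\big) \to L(RZ) \xrightarrow{\epsilon^{\mathrm{ord}}_Z} Z$, where $\eta^{\mathrm{ord}}$, $\epsilon^{\mathrm{ord}}$ are the unit and counit of $L\dashv R$. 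The three hypotheses on $\mathcal{C}_0$ enter precisely here: cofibrancy of the objects of $\mathcal{C}_0$ is what makes $L\dashv R$ a Quillen adjunction in the first place (Proposition~\ref{adjunction}); fibrancy of the objects of $\mathcal{C}_0$ lets us take $Z=Z_f$ when $Z=x\in\mathcal{C}_0$; and the fact that representable presheaves are projectively cofibrant (Proposition~\ref{repr-functor-cofibrant}) lets us take $M_c=M$ when $M=\mathcal{C}(-,y)$.

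For Part~(1), fix $y\in\mathcal{C}_0$ and put $M = \mathcal{C}(-,y)$, which is cofibrant. Then $LM = \mathcal{C}(-,y)\underset{\mathcal{C}_0}{\otimes}J$ is canonically isomorphic to $Jy=y$ by the co-Yoneda lemma (see~\cite{Loregian}), and under this identification the ordinary unit $\eta^{\mathrm{ord}}_M\colon \mathcal{C}(-,y)\to \mathcal{C}(J-,y)$ is an isomorphism (indeed the identity, since $\mathcal{C}(J-,y)$ restricted to $\mathcal{C}_0$ is $\mathcal{C}(-,y)$ and $J$ is fully faithful); this is the usual fact that the unit of the enriched nerve--realisation adjunction is invertible at representables (cf.~\cite{Kelly}). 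Since $y$ is fibrant, $y\to y_f$ is a weak equivalence between fibrant objects, so by Proposition~\ref{Quillen-adjoint}(2) (equivalently, by the Remark following Definition~\ref{homotopy-tiny} applied objectwise) the induced map $R(y)=\mathcal{C}(J-,y)\to\mathcal{C}(J-,y_f)=R\big((LM)_f\big)$ is a weak equivalence in $\mathcal{V}^{\mathcal{C}_0^{op}}$. Hence the derived unit at $M$ is a weak equivalence, i.e.\ an isomorphism in $Ho(\mathcal{V}^{\mathcal{C}_0^{op}})$.

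For Part~(2), fix $x\in\mathcal{C}_0$; it is fibrant, so we may use the displayed formula for the counit with $Z=x$. Here $RZ = Hom(\mathcal{C}_0,x) = \mathcal{C}(J-,x)$ is a representable presheaf, hence cofibrant (Proposition~\ref{repr-functor-cofibrant}), so $(RZ)_c\to RZ$ is a weak equivalence between cofibrant objects and $L\big((RZ)_c\big)\to L(RZ)$ is a weak equivalence by Proposition~\ref{Quillen-adjoint}(1). Finally the ordinary counit $\epsilon^{\mathrm{ord}}_x\colon \mathcal{C}(J-,x)\underset{\mathcal{C}_0}{\otimes}J \to x$ is exactly the canonical coend map exhibiting $x$ as $\int^{d\in\mathcal{C}_0}\mathcal{C}_0(d,x)\otimes Jd$, which is an isomorphism by the co-Yoneda lemma. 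Composing, the derived counit at $x$ is a weak equivalence, hence an isomorphism in $Ho(\mathcal{C})$.

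I expect the only genuinely delicate point to be the two identifications of the (underived) unit and counit of the nerve--realisation adjunction $L\dashv R$ with density isomorphisms: that $\eta^{\mathrm{ord}}$ at a representable is invertible, and that $\epsilon^{\mathrm{ord}}$ at an object of $\mathcal{C}_0$ is the co-Yoneda isomorphism. Both follow by unwinding the defining adjunction isomorphism $\mathcal{C}\big(F\underset{\mathcal{C}_0}{\otimes}J,\ c\big)\cong \mathcal{V}^{\mathcal{C}_0^{op}}\big(F,\ \mathcal{C}(J-,c)\big)$ against the coend calculus of~\cite{Loregian}; this is purely formal (no homotopy theory), but it is the one place where an actual computation is needed, as opposed to the essentially bookkeeping steps (Lemma~\ref{computing-left-derived}, Proposition~\ref{Quillen-adjoint}, and the observation that the standing hypotheses on $\mathcal{C}_0$ make all the relevant (co)fibrant replacements inessential) that surround it.
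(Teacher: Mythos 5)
Your proof is correct and uses exactly the same three ingredients as the paper's: the cofibrancy of representable presheaves (Proposition~\ref{repr-functor-cofibrant}), the co-Yoneda isomorphism identifying $\mathcal{C}(-,y)\underset{\mathcal{C}_0}{\otimes}J$ with $y$, and the standing fibrancy/cofibrancy hypotheses on $\mathcal{C}_0$. The organization differs slightly: the paper invokes Lemma~\ref{computing-left-derived} to compute the source and target of $\eta$ and $\epsilon$ directly and then observes that, under the hypotheses, they coincide, leaving somewhat implicit the identification of the unit/counit with these comparison maps; you instead decompose the derived unit (resp.\ counit) at a cofibrant (resp.\ fibrant) object into the ordinary unit $\eta^{\mathrm{ord}}$ (resp.\ counit $\epsilon^{\mathrm{ord}}$) followed by (resp.\ preceded by) $R$ or $L$ applied to a (co)fibrant replacement, and verify that each factor is a weak equivalence, the first by co-Yoneda and full faithfulness of $J$, the second by Proposition~\ref{Quillen-adjoint} and the standing (co)fibrancy of $\mathcal{C}_0$. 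This factorization is arguably cleaner, because it shows directly why the map itself, and not just its domain and codomain, is an isomorphism in the homotopy category. Either route is acceptable; the underlying content is identical.
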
 
\begin{proof}
\begin{enumerate}
    \item Using, respectively, Lemma \ref{computing-left-derived}, Proposition \ref{repr-functor-cofibrant} and the co-Yoneda Lemma, we get:
\begin{equation*}
    \mathbf{L} (-\underset{\mathcal{C}_0}{\otimes} J) (\gamma_{\mathcal{V}^{\mathcal{C}_0^{op}}} \mathcal{C} ( - , y)) = \gamma_{\mathcal{C}} ((\mathcal{C} ( - , y))_c\underset{\mathcal{C}_0}{\otimes} J) = \gamma_{\mathcal{C}} (\mathcal{C} ( - , y)\underset{\mathcal{C}_0}{\otimes} J) \xrightarrow{iso} \gamma_{\mathcal{C}} y
\end{equation*}

Then,
\begin{equation*}
    \mathbf{R} Hom(\mathcal{C}_0 , -) \mathbf{L} (- \underset{\mathcal{C}_0}{\otimes} J) (\gamma_{\mathcal{V}^{\mathcal{C}_0^{op}}} \mathcal{C} ( - , y))  \xrightarrow{iso} \mathbf{R} Hom(\mathcal{C}_0 , -) (\gamma_{\mathcal{C}} y) = \gamma_{\mathcal{V}^{\mathcal{C}^{op}}} \mathcal{C}((-)_c , y_f)
\end{equation*}

Hence, the map $\eta_{\gamma_{\mathcal{V}^{\mathcal{C}_0^{op}}}\mathcal{V}(- , y)}$ is, up to an isomorphism:
\begin{equation*}
    \eta_{\gamma_{\mathcal{V}^{\mathcal{C}_0^{op}}}\mathcal{V}(- , y)}: \gamma_{\mathcal{V}^{\mathcal{C}^{op}}}\mathcal{C} (- , y) \rightarrow \gamma_{\mathcal{V}^{\mathcal{C}^{op}}} \mathcal{C} ( (-)_c , y_f)
\end{equation*}
Being every object of $\mathcal{C}_0$ fibrant and cofibrant, the two functors $\mathcal{C}(- , y)$ and $\mathcal{C}((-)_c , y_f)$ coincide, and hence $\eta_{\gamma_{\mathcal{V}^{\mathcal{C}_0^{op}}}\mathcal{V}(- , y)}$ is an isomorphism.
\item For any $x\in\mathcal{C}_0$, using the fact that every object in $\mathcal{C}_0$ is both fibrant and cofibrant:
\begin{equation*}
   \mathbf{R} Hom(\mathcal{C}_0 , -) (\gamma_{\mathcal{C}} x) = \gamma_{\mathcal{V}^{\mathcal{C}_0^{op}}} \mathcal{C}((-)_c , x_f) = \gamma_{\mathcal{V}^{\mathcal{C}_0^{op}}} \mathcal{C}(- , x)
\end{equation*}

Then, applying $\mathbf{L} (- \underset{\mathcal{C}_0}{\otimes} J)$ to $\gamma_{\mathcal{V}^{\mathcal{C}_0^{op}}} \mathcal{C}( - , x)$, and using Proposition \ref{repr-functor-cofibrant}, we obtain:
\begin{equation*}
    \mathbf{L} (- \underset{\mathcal{C}_0}{\otimes} J) (\gamma_{\mathcal{V}^{\mathcal{C}_0^{op}}} \mathcal{C}( - , x)) = \gamma_{\mathcal{C}} (- \underset{\mathcal{C}_0}{\otimes} J ) (\mathcal{C} (- , x))_c = \gamma_{\mathcal{C}} (- \underset{\mathcal{C}_0}{\otimes} J) (\mathcal{C} (- , x))
\end{equation*}
Using the co-Yoneda Lemma:
\begin{equation*}
    \gamma_{\mathcal{C}} (- \underset{\mathcal{C}_0}{\otimes}  J ) (\mathcal{C} (- , x)) = \gamma_{\mathcal{C}} ( \mathcal{C} (- , x) \underset{\mathcal{C}_0}{\otimes} J) \xrightarrow{iso} \gamma_{C} x
\end{equation*}
which concludes the proof that $\epsilon_{\gamma_{\mathcal{C}} x}$ is an isomorphism. \qedhere
\end{enumerate}
\end{proof}

\begin{Rem}
The functor $ - \underset{\mathcal{C}_0}{\otimes} J$ has value in the subcategory $\langle \mathcal{C}_0 \rangle$, since every $F \underset{\mathcal{C}_0}{\otimes} J$ is a weighted colimit of a diagram of objects in $\mathcal{C}_0$. This allows us to restrict the adjunction in \ref{derived-adjunction} to an adjunction between $Ho(\mathcal{V}^{\mathcal{C}_0^{op}})$ and $Ho(\langle \mathcal{C}_0 \rangle)$.
\end{Rem}

\begin{Th}\label{Th}
Let $\mathcal{C}_0$ be a small, full subcategory of $\mathcal{C}$ of fibrant and cofibrant homotopy tiny objects. Then adjunction \ref{derived-adjunction} induces an equivalence between $Ho(\langle\mathcal{C}_0 \rangle)$ and $Ho(\mathcal{V}^{\mathcal{C}_0^{op}})$.
\end{Th}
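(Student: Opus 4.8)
The goal is to show that the restriction of adjunction \ref{derived-adjunction},
\[
\mathbf{L}(-\underset{\mathcal{C}_0}{\otimes}J) : Ho(\mathcal{V}^{\mathcal{C}_0^{op}})\rightleftarrows Ho(\langle\mathcal{C}_0\rangle) : \mathbf{R}Hom(\mathcal{C}_0,-)
\]
(which lands in $\langle\mathcal{C}_0\rangle$ by the Remark preceding the statement) is an adjoint equivalence; this follows as soon as its unit $\eta$ and counit $\epsilon$ are isomorphisms. The whole argument rests on one closure principle: if $\Phi,\Psi$ are functors on homotopy categories of $\mathcal{V}$-model categories that both preserve homotopy weighted colimits and $\tau\colon\Phi\Rightarrow\Psi$ is a natural transformation, then the full subcategory of objects $z$ with $\tau_z$ invertible is closed under weak equivalences and under homotopy weighted colimits. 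Closure under weak equivalences is immediate from naturality; for homotopy weighted colimits, if $z\simeq W\underset{\mathcal{D}}{\otimes^h}F$ with each $\tau_{F(d)}$ invertible, then chasing the zig-zags of \ref{hcolim-map} for $\Phi$ and for $\Psi$ and using naturality of $\tau$ exhibits $\tau_z$ as the map on homotopy weighted colimits induced by the objectwise weak equivalence $\tau_{F(-)}$, which is again a weak equivalence because $-\underset{\mathcal{D}}{\otimes}-$ is a left Quillen bifunctor (cf.\ Proposition \ref{Quillen-adjoint}).

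For the counit, apply the principle with $\Phi=\mathbf{L}(-\underset{\mathcal{C}_0}{\otimes}J)\circ\mathbf{R}Hom(\mathcal{C}_0,-)$, $\Psi=\mathrm{Id}_{Ho(\langle\mathcal{C}_0\rangle)}$ and $\tau=\epsilon$. Here $\mathbf{R}Hom(\mathcal{C}_0,-)$ preserves homotopy weighted colimits by Proposition \ref{Hom(C_0, -)-preserves-hcolim}, and $\mathbf{L}(-\underset{\mathcal{C}_0}{\otimes}J)$ does so by Proposition \ref{L(- x J)-preserves-hcolim}, hence $\Phi$ does, and $\Psi$ trivially does. By Lemma \ref{unit+counit}(2), $\epsilon_x$ is invertible for every $x\in\mathcal{C}_0$, so $\{x\in\langle\mathcal{C}_0\rangle:\epsilon_x\text{ invertible}\}$ is a closed subcategory of $\mathcal{C}$ (Definition \ref{generates}) containing $\mathcal{C}_0$; by the minimality of $\langle\mathcal{C}_0\rangle$ (Definition \ref{generates2}) it is all of $\langle\mathcal{C}_0\rangle$, so $\epsilon$ is an isomorphism.

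For the unit, apply the principle with $\Phi=\mathrm{Id}_{Ho(\mathcal{V}^{\mathcal{C}_0^{op}})}$, $\Psi=\mathbf{R}Hom(\mathcal{C}_0,-)\circ\mathbf{L}(-\underset{\mathcal{C}_0}{\otimes}J)$ and $\tau=\eta$; both preserve homotopy weighted colimits as before, and by Lemma \ref{unit+counit}(1), $\eta_F$ is invertible whenever $F$ is representable. It remains to see that the representables generate $\mathcal{V}^{\mathcal{C}_0^{op}}$: writing $\mathcal{Y}\colon\mathcal{C}_0\to\mathcal{V}^{\mathcal{C}_0^{op}}$ for the Yoneda embedding, each $\mathcal{Y}(d)$ is cofibrant by Proposition \ref{repr-functor-cofibrant}, so $\mathcal{Y}$ is a pointwise cofibrant diagram and $F_c\underset{\mathcal{C}_0}{\otimes}\mathcal{Y}$ already computes $F_c\underset{\mathcal{C}_0}{\otimes^h}\mathcal{Y}$; by the co-Yoneda Lemma $F_c\underset{\mathcal{C}_0}{\otimes}\mathcal{Y}\cong F_c\sim F$, so every presheaf $F$ is weakly equivalent to a homotopy weighted colimit of representables. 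Hence $\{F:\eta_F\text{ invertible}\}$ is a closed subcategory of $\mathcal{V}^{\mathcal{C}_0^{op}}$ containing all representables, so it is all of $\mathcal{V}^{\mathcal{C}_0^{op}}$; thus $\eta$ is an isomorphism, and adjunction \ref{derived-adjunction} restricts to an equivalence $Ho(\mathcal{V}^{\mathcal{C}_0^{op}})\simeq Ho(\langle\mathcal{C}_0\rangle)$.

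The main obstacle will be the closure principle itself: one must verify carefully, at the level of the left/right approximations and through the zig-zags of Definition \ref{def-preserve-hcolim}, that the (derived) transformation $\tau$ evaluated at a homotopy weighted colimit genuinely coincides with the homotopy weighted colimit of its components, compatibly with the comparison maps $\delta$ and with the maps induced by cofibrant/fibrant replacement. This is essentially bookkeeping with replacements and the universal property of coends, but it is where all the care is needed; once it is in place, the rest is just the assembly of Lemma \ref{unit+counit}, Propositions \ref{Hom(C_0, -)-preserves-hcolim}, \ref{L(- x J)-preserves-hcolim} and \ref{repr-functor-cofibrant}, and Definitions \ref{generates}--\ref{generates2}.
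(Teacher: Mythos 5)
Your proposal is correct, and it follows the same high-level strategy as the paper (show the unit and counit of the derived adjunction are isomorphisms, using that both derived adjoints preserve homotopy weighted colimits and that unit/counit are invertible on representables resp.\ on objects of $\mathcal{C}_0$). The organizational difference is that you isolate a single ``closure principle'' --- the locus of invertibility of a natural transformation between two functors that both preserve homotopy weighted colimits is a closed subcategory in the sense of Definition~\ref{generates} --- and apply it twice, once to $\epsilon$ and once to $\eta$. The paper instead verifies invertibility directly: for $\eta$ it writes each $F$ as $F\underset{\mathcal{C}_0}{\otimes}\mathcal{Y}$ and passes the weighted colimit through the two derived functors; for $\epsilon$ it checks (i) stability under weak equivalence, and (ii) invertibility on objects of the form $W_c\underset{\mathcal{D}}{\otimes}F_c$ with $F$ valued in $\mathcal{C}_0$.

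In fact your reformulation closes a gap that the paper leaves implicit. Proving that $\epsilon$ is invertible on all weighted colimits of $\mathcal{C}_0$-valued diagrams (and on things weakly equivalent to them) shows only that the first stage of the closure lies in the invertibility locus; $\langle\mathcal{C}_0\rangle$ is by Definition~\ref{generates2} the \emph{smallest} closed subcategory containing $\mathcal{C}_0$, which a priori is built by transfinite iteration of weighted colimits of diagrams landing anywhere in the closure, not just in $\mathcal{C}_0$. Your closure principle sidesteps this entirely: once you know $\{x:\epsilon_x\ \text{invertible}\}$ is itself a closed subcategory containing $\mathcal{C}_0$, minimality finishes the argument with no need to analyze what $\langle\mathcal{C}_0\rangle$ looks like. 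You are also right that the remaining work is the ``bookkeeping'' step you flag --- checking, through the zig-zags of Definition~\ref{def-preserve-hcolim} and their compatibility under composition and naturality of $\tau$, that the comparison isomorphisms $\Phi(W\otimes^h F)\cong W\otimes^h\Phi F$ and $\Psi(W\otimes^h F)\cong W\otimes^h\Psi F$ conjugate $\tau_{W\otimes^h F}$ into $W\otimes^h(\tau_F)$. This is the same diagram-chase the paper elides, but your framing makes it clear exactly what must be verified and why it suffices.
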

\begin{proof}
We prove the thesis by showing that the unit and the counit are isomorphisms.

\begin{enumerate}
    \item Every $F\in\mathcal{V}^{\mathcal{C}_0^{op}}$ is isomorphic to $F \underset{\mathcal{C}_0}{\otimes} \mathcal{Y}$, where $\mathcal{Y} : \mathcal{C}_0 \rightarrow \mathcal{V}^{\mathcal{C}_0^{op}}$ denotes the Yoneda embedding. Then, by applying, in order, Proposition \ref{L(- x J)-preserves-hcolim},  Proposition \ref{Hom(C_0, -)-preserves-hcolim}. and Point (1) of Lemma \ref{unit+counit}, we conclude that $\eta_{.}$ is an isomorphism for every object of $\mathcal{V}^{\mathcal{C}_0^{op}}$.
    \item For any small category $\mathcal{D}$, and for any pair of functors $F : \mathcal{D} \rightarrow \mathcal{C}_0$, $W: \mathcal{D}^{op}\rightarrow \mathcal{V}$, if $x\in \mathcal{C}$ is weakly equivalent to $W_c \underset{\mathcal{D}}{\otimes} F_c$, then $\epsilon_{\gamma_{\mathcal{C}} x}$ is an isomorphism. Indeed:
\begin{itemize}
\item[$\diamond$] if $y \sim z$ in the model category $\mathcal{C}$, and if $\epsilon_{\gamma_{\mathcal{C}} y}$ is an isomorphism, then $\epsilon_{\gamma_{\mathcal{C}} z}$ is an isomorphism too.
This comes easily from the fact that $\gamma_{\mathcal{C}}(y)$ and $\gamma_{\mathcal{C}} (z)$ are isomorphic, and by looking at the commutative square:
\begin{equation*}
\xymatrix{ \mathbf{L} (- \underset{\mathcal{C}_0}{\otimes} J) \mathbf{R} Hom(\mathcal{C}_0, -) \gamma_{\mathcal{C}} (y) \ar[d] \ar[r]^-{\epsilon_{\gamma_{\mathcal{C}} y}} & \gamma_{\mathcal{C}} (y) \ar[d] \\
\mathbf{L} (- \underset{\mathcal{C}_0}{\otimes} J) \mathbf{R} Hom(\mathcal{C}_0, -) \gamma_{\mathcal{C}} (z) \ar[r]_-{\epsilon_{\gamma_{\mathcal{C}} z}} & \gamma_{\mathcal{C}} (z)}    
\end{equation*}
    \item[$\diamond$] given two functors $F: \mathcal{D} \rightarrow \mathcal{C}_0$ and $W : \mathcal{D}^{op} \rightarrow \mathcal{V}$, then $\epsilon_{W \underset{\mathcal{D}}{\otimes} F}$ is an isomorphism: this follows from Proposition \ref{Hom(C_0, -)-preserves-hcolim}, Proposition \ref{L(- x J)-preserves-hcolim}, and Point (2) of Lemma \ref{unit+counit}. \qedhere
\end{itemize}
\end{enumerate}
\end{proof}

\begin{Rem}
If $\mathcal{C}_0$ is a subcategory of $\mathcal{C}$ whose objects are homotopy tiny, but not fibrant and cofibrant, one could obtain a new category $\mathcal{C}'_0$ by taking fibrant and cofibrant replacements of every object of $\mathcal{C}_0$. It is easy to verify that $\langle \mathcal{C}_0 \rangle$ and $\langle \mathcal{C}'_0 \rangle$ coincide. In this case, Theorem \ref{Th} gives us the existence of an equivalence between $Ho(\langle \mathcal{C}_0 \rangle)$ and $Ho(\mathcal{V}^{{\mathcal{C}'_0}^{op}})$.
\end{Rem}

\begin{Rem}
The equivalence of Theorem \ref{Th} is not, in general, a Quillen equivalence, since the category $\langle\mathcal{C}_0\rangle$ is not even, in general, a model category itself. However, if it happens that  $\langle\mathcal{C}_0\rangle$ is a model category, then this equivalence turns out to be a Quillen equivalence, as the argument in the proof of Proposition \ref{adjunction} can be repeated with $\mathcal{C}$ replaced by $\langle \mathcal{C}_0 \rangle$.
\end{Rem}

\begin{Cor}\label{result}
In the hypothesis of Theorem \ref{Th}, suppose that $\mathcal{C}$ is generated by $\mathcal{C}_0$. Then $\mathcal{C}$ and $\mathcal{V}^{\mathcal{C}_0^{op}}$ are Quillen equivalent.
\end{Cor}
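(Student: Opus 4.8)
The plan is to read the corollary off Theorem \ref{Th} together with the standard characterization of Quillen equivalences. First I would note that, since every object of $\mathcal{C}_0$ is cofibrant, Proposition \ref{adjunction} applies verbatim and provides a genuine Quillen adjunction
\[
- \underset{\mathcal{C}_0}{\otimes} J : \mathcal{V}^{\mathcal{C}_0^{op}} \rightleftarrows \mathcal{C} : Hom(\mathcal{C}_0, -),
\]
whose induced derived adjunction is precisely the adjunction \ref{derived-adjunction}. In particular, $\langle \mathcal{C}_0 \rangle$ is a model category here, so the obstruction mentioned in the Remark preceding the corollary does not arise.

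Next, I would use the hypothesis that $\mathcal{C}$ is generated by $\mathcal{C}_0$, i.e. $\langle \mathcal{C}_0 \rangle = \mathcal{C}$, so that $Ho(\langle \mathcal{C}_0 \rangle) = Ho(\mathcal{C})$ and no restriction of \ref{derived-adjunction} is needed. Theorem \ref{Th} then says that the total derived adjunction $\mathbf{L}(- \underset{\mathcal{C}_0}{\otimes} J) \dashv \mathbf{R}Hom(\mathcal{C}_0, -)$ is an adjoint equivalence between $Ho(\mathcal{V}^{\mathcal{C}_0^{op}})$ and $Ho(\mathcal{C})$: its unit and counit are natural isomorphisms. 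Finally I would invoke the well-known fact (see \cite[Proposition 1.3.13]{Hovey}) that a Quillen adjunction is a Quillen equivalence exactly when the induced total derived adjunction is an adjoint equivalence of homotopy categories; applied to the Quillen adjunction displayed above, this yields that $- \underset{\mathcal{C}_0}{\otimes} J \dashv Hom(\mathcal{C}_0, -)$ is a Quillen equivalence, i.e. $\mathcal{C}$ and $\mathcal{V}^{\mathcal{C}_0^{op}}$ are Quillen equivalent.

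I do not expect a genuine obstacle: the argument is essentially bookkeeping on top of Theorem \ref{Th}. The only point deserving a line of care is the identification of the equivalence produced by Theorem \ref{Th} with the derived adjunction of the specific Quillen adjunction of Proposition \ref{adjunction} — but this is immediate from the way \ref{derived-adjunction} was defined. If one prefers not to quote the characterization of Quillen equivalences as a black box, one can unwind it directly: it suffices to show that for cofibrant $F \in \mathcal{V}^{\mathcal{C}_0^{op}}$ and fibrant $y \in \mathcal{C}$ a map $F \underset{\mathcal{C}_0}{\otimes} J \to y$ is a weak equivalence iff its adjunct $F \to Hom(\mathcal{C}_0, y)$ is, and this follows from the unit and counit of the derived adjunction being isomorphisms, as established in the proof of Theorem \ref{Th}, together with Lemma \ref{computing-left-derived} to compare the derived units/counits with the underived ones.
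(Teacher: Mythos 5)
Your argument is correct and coincides with the paper's (implicit) reasoning: the corollary has no written proof, but the remark just before it indicates exactly this route — the hypothesis $\langle\mathcal{C}_0\rangle=\mathcal{C}$ removes the obstruction to $\langle\mathcal{C}_0\rangle$ being a model category, and the equivalence of homotopy categories from Theorem \ref{Th} is induced by the Quillen adjunction of Proposition \ref{adjunction}, so the standard characterization of Quillen equivalences (\cite[Proposition 1.3.13]{Hovey}) gives the result. Your filling in of these steps, including the identification of the derived adjunction with \ref{derived-adjunction}, is exactly the bookkeeping the paper leaves to the reader.
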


\section{Motivating examples} \label{examples}
In this last section, we see how our result includes both Elmendorf's Theorem and Schwede--Shipley's Theorem. We show this by state these theorems and prove them as consequences of Corollary \ref{result}.

\subsection{Elmendorf's Theorem}
Let $\mathcal{V}$ be the category of compactly generated spaces, and let $\mathcal{O}_G$ be the set of orbits of a compact Lie group $G$. We denote by $\mathcal{V}^G$ the category of $G$-spaces.
\begin{Th}[Elmendorf's Theorem] \label{Elm-th}
There exists a Quillen equivalence 
\begin{equation*}
    \theta : \mathcal{V}^{\mathcal{O}_{G}^{op}} \rightleftarrows \mathcal{V}^{G} : \phi
\end{equation*}
where $\mathcal{V}^{\mathcal{O}_{G}^{op}}$ is equipped with the projective model structure, whereas $\mathcal{V}^{G}$ is equipped with the $G$-fixed point model structure.
\end{Th}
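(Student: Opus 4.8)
The plan is to exhibit Elmendorf's Theorem as a direct instance of Corollary \ref{result}. Take $\mathcal{C} = \mathcal{V}^G$ equipped with the $G$-fixed point model structure, and let $\mathcal{C}_0 = \mathcal{O}_G$ be the full $\mathcal{V}$-enriched subcategory of $\mathcal{V}^G$ spanned by the orbits $G/H$, with $H$ a closed subgroup of $G$; its $\mathcal{V}$-hom objects are $\mathcal{V}^G(G/H, G/K) \cong (G/K)^H$. Under the natural isomorphism $\mathcal{V}^G(G/H, X) \cong X^H$, the functor $Hom(\mathcal{C}_0, -)$ of Proposition \ref{adjunction} is precisely $\phi$, sending a $G$-space $X$ to the presheaf $G/H \mapsto X^H$, and its left adjoint $-\underset{\mathcal{O}_G}{\otimes} J$ is $\theta$. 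So it suffices to verify the hypotheses of Corollary \ref{result}: that $\mathcal{V}^G$ is a $\mathcal{V}$-model category, that $\mathcal{O}_G$ may be taken small with fibrant and cofibrant \emph{homotopy tiny} objects, and that $\mathcal{O}_G$ generates $\mathcal{V}^G$.

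First I would check the structural hypotheses. The category $\mathcal{V}$ of compactly generated spaces is a cofibrantly generated symmetric monoidal model category with cofibrant unit, and $\mathcal{V}^G$ with the $G$-fixed point model structure is cofibrantly generated by the maps $G/H \times (S^{n-1}\hookrightarrow D^n)$ and $G/H \times (D^n\hookrightarrow D^n\times[0,1])$; the pushout-product axiom of Definition \ref{SP} then reduces, using that $(-)\times G/H$ commutes with $\mathcal{V}$-tensors, to the pushout-product axiom in $\mathcal{V}$, so $\mathcal{V}^G$ is a $\mathcal{V}$-model category. A compact Lie group has only a set of closed subgroups, so $\mathcal{O}_G$ is essentially small and may be replaced by a skeleton; every orbit $G/H$ is cofibrant, being the cell $G/H\times D^0$, and fibrant, since $X^H\to *$ is a Serre fibration for every $G$-space $X$ and every $H$.

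The crucial point is that each orbit $G/H$ is homotopy tiny. Since the weak equivalences of the $G$-fixed point model structure are exactly the maps $f$ with $f^K$ a weak equivalence for all closed $K\leq G$, the functor $\mathcal{V}^G(G/H,-) = (-)^H$ is homotopical; as $G/H$ is fibrant and cofibrant, the right approximation of $\mathbf{R}\,\mathcal{V}^G(G/H,-)$ is $(-)^H$ itself, so by Definition \ref{def-preserve-hcolim} I must show that for every small $\mathcal{D}$, every pointwise cofibrant $F:\mathcal{D}\to\mathcal{V}^G$ and every cofibrant weight $W:\mathcal{D}^{op}\to\mathcal{V}$ the canonical map $W\underset{\mathcal{D}}{\otimes}\bigl((-)^H\cdot F\bigr)\to\bigl(W\underset{\mathcal{D}}{\otimes}F\bigr)^H$ is a weak equivalence. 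I would prove this using the geometry of $G$-CW complexes: $(-)^H$ commutes with $\mathcal{V}$-tensors (products with a trivially acted space), with coproducts, and with pushouts along $G$-cofibrations, because the $H$-fixed points of a $G$-CW complex are computed cell by cell, those of the cell $G/K\times D^n$ being $(G/K)^H\times D^n$. Since the cofibrant weighted colimit $W\underset{\mathcal{D}}{\otimes}F$ is assembled from exactly these operations (as a bar-type coend), the comparison map is in fact an isomorphism, so $(-)^H$ preserves homotopy weighted colimits. I expect this verification — making the cell-by-cell argument precise for the colimits occurring in a cofibrant bar construction — to be the main obstacle.

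Finally I would show $\langle\mathcal{O}_G\rangle = \mathcal{V}^G$: the domains and codomains of the generating cofibrations, $G/H\times S^{n-1}$ and $G/H\times D^n$, are homotopy weighted colimits (tensors) of orbits, and every $G$-CW complex is obtained from such maps by coproducts, pushouts along cofibrations and transfinite composition, again homotopy weighted colimits, so $\langle\mathcal{O}_G\rangle$ contains all $G$-CW complexes; being closed under weak equivalences (Definition \ref{generates}) it then contains every $G$-space via its cofibrant replacement. Corollary \ref{result} now applies and gives a Quillen equivalence between $\mathcal{V}^G$ and $\mathcal{V}^{\mathcal{O}_G^{op}}$ with the projective model structure, realized by $\theta\dashv\phi$, which is the statement of the theorem.
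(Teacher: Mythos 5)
Your proposal is correct and follows the paper's own strategy: set $\mathcal{C} = \mathcal{V}^G$ and $\mathcal{C}_0 = \mathcal{O}_G$, check that orbits are fibrant, cofibrant and homotopy tiny, show $\langle \mathcal{O}_G \rangle = \mathcal{V}^G$ via $G$-CW approximation, and invoke Corollary \ref{result}. The only deviation is cosmetic: where you verify that $(-)^H$ preserves homotopy weighted colimits by a cell-by-cell bar-construction argument, the paper passes through the coequalizer presentation of the coend and cites the theorem that for compact Lie $G$ fixed points commute with Bousfield--Kan homotopy colimits --- both verifications rest on the same facts that $(-)^H$ commutes with tensoring by trivially acted spaces, with coproducts, and with pushouts along cofibrations.
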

More details about Elmendorf's Theorem can be found in \cite{Elmendorf}, where one can find that the functor $\phi$ is defined (up to an isomorphism) as the functor sending $x$ to $\mathcal{V}^{\mathcal{O}^{op}_{G}}(- , x)$, as in the case of the right adjoint of Proposition \ref{adjunction}. Moreover:
\begin{Lemma}\label{lemma-Elm}
In the category $\mathcal{V}^G$,  orbits are homotopy tiny objects.
\end{Lemma}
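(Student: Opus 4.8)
The plan is to identify the enriched mapping space out of an orbit with an ordinary fixed-point space, and then to use the fact that fixed-point functors, although merely right adjoints, commute up to weak equivalence with the bar construction computing homotopy weighted colimits of cofibrant diagrams of $G$-spaces. Concretely, for a closed subgroup $H \le G$ there is a natural homeomorphism $\mathcal{V}^G(G/H, X) \cong X^H$, given by evaluation at the coset $eH$, so that $\mathcal{V}^G(G/H, -)$ is the $H$-fixed point functor $(-)^H : \mathcal{V}^G \to \mathcal{V}$. In the $G$-fixed point model structure every object is fibrant, the orbit $G/H$ is cofibrant (it is one of the generating cells), and $(-)^H$ is homotopical, since by definition a weak equivalence of $G$-spaces induces a weak equivalence on $K$-fixed points for every closed $K \le G$, in particular for $K = H$. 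By Remark \ref{compute-hwcolim-preservation}, the right approximation of $\mathbf{R}\,\mathcal{V}^G(G/H, -)$ is then just $(-)^H$, and proving that $G/H$ is homotopy tiny amounts to showing that for every small $\mathcal{D}$ and all $W : \mathcal{D}^{op} \to \mathcal{V}$, $F : \mathcal{D} \to \mathcal{V}^G$, every arrow of the zig zag \ref{htiny-in-model-ct} (with $x = G/H$) is a weak equivalence.

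The second step collapses that zig zag to a single comparison map. I would invoke the classical fact that the fixed-point spaces of a $G$-CW complex are again CW complexes --- valid because $G$ is compact Lie, the fixed points of an orbit being the closed submanifold $(G/K)^H = \{\, gK : g^{-1}Hg \subseteq K \,\}$ --- so that $(-)^H$ carries cofibrant objects of $\mathcal{V}^G$ to cofibrant objects of $\mathcal{V}$. Hence $(-)^H \circ F_c$ is already objectwise cofibrant, its cofibrant replacement map is an objectwise weak equivalence, and, since $(-)^H$ is homotopical, so is $(-)^H \circ F_c \to (-)^H \circ F$. Applying the left Quillen bifunctor $W_c \underset{\mathcal{D}}{\otimes} -$, which preserves weak equivalences between objectwise cofibrant diagrams, turns every arrow of \ref{htiny-in-model-ct} except the last into a weak equivalence, so it remains only to prove that the canonical comparison map
\begin{equation*}
W_c \underset{\mathcal{D}}{\otimes} \big( (F_c(-))^H \big) \longrightarrow \big( W_c \underset{\mathcal{D}}{\otimes} F_c \big)^H
\end{equation*}
is a weak equivalence.

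The third step handles this map via the two-sided bar construction. One computes $W_c \underset{\mathcal{D}}{\otimes} F_c$ as the geometric realization of the Reedy cofibrant simplicial $G$-space $B_\bullet(W_c, \mathcal{D}, F_c)$, whose levels are coproducts of spaces of the form $K \times F_c(d)$ with $K$ cofibrant in $\mathcal{V}$ and $F_c(d)$ a $G$-CW complex, and whose realization maps by a weak equivalence to the strict weighted colimit $W_c \underset{\mathcal{D}}{\otimes} F_c$. Now $(-)^H$ commutes with coproducts, with the functors $K \times -$ (since $G$ acts trivially on $K$), with pushouts along $G$-cofibrations, and with transfinite composites of $G$-cofibrations; as these are precisely the operations out of which the realization of a Reedy cofibrant simplicial $G$-space is assembled, $(-)^H$ commutes with this realization. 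Hence there are natural weak equivalences $|B_\bullet(W_c, \mathcal{D}, (F_c(-))^H)| \simeq (-)^H |B_\bullet(W_c, \mathcal{D}, F_c)| \simeq (W_c \underset{\mathcal{D}}{\otimes} F_c)^H$, together with the weak equivalence $|B_\bullet(W_c, \mathcal{D}, (F_c(-))^H)| \simeq W_c \underset{\mathcal{D}}{\otimes} ((F_c(-))^H)$; chasing these around the evident commuting square and using two-out-of-three shows the comparison map is a weak equivalence, and therefore $G/H$ is homotopy tiny.

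The main obstacle is exactly the third step. Since $(-)^H$ is only a right adjoint it does not preserve colimits in general --- for instance it already fails to commute with non-reflexive coequalizers --- so the real content of the lemma is the homotopy-invariant statement that, once all inputs are cofibrant, $(-)^H$ nevertheless commutes with the homotopy colimits assembling the bar construction. Verifying the ingredients --- that $G$-CW complexes have CW fixed-point spaces and that $(-)^H$ commutes with pushouts along $G$-cofibrations and their transfinite composites --- is routine but genuinely uses that $G$ is compact Lie, via the cell structure of orbit fixed points; and throughout one must work with the naive (categorical) fixed points computed by the enriched hom $\mathcal{V}^G(G/H, -)$, not with geometric fixed points.
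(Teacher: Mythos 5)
Your proof is correct and shares the same three pillars as the paper's argument: the identification $\mathcal{V}^G(G/H,-)\cong(-)^H$, the fact that $(-)^H$ preserves cofibrancy and weak equivalences, and the observation that $G$ acts trivially on objects of $\mathcal{V}$ so $(-)^H$ passes through $Wd\otimes-$. Where you diverge is in the computational tool used to commute $(-)^H$ past the weighted colimit. The paper writes $W\underset{\mathcal{D}}{\otimes}F$ as a strict coequalizer of coproducts and then appeals to Malkiewich's theorem (via Bousfield) that for $G$ compact Lie the fixed-point functors commute with Bousfield--Kan homotopy colimits, concluding in particular that they commute with coproducts. You instead replace the strict weighted colimit by the geometric realization of the two-sided bar construction and argue that $(-)^H$ commutes with each of the cellular operations (coproducts, products with trivial $G$-spaces, pushouts along $G$-cofibrations, transfinite composites) out of which the realization of a Reedy cofibrant simplicial $G$-space is assembled. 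Your route is longer and more hands-on, but it is arguably more self-contained: it makes transparent where the compact Lie hypothesis is really used (the CW structure of fixed points of orbits) and avoids the somewhat delicate step in the paper's proof of passing $(-)^H$, a right adjoint, through a raw (non-reflexive) coequalizer --- a step the paper covers by appealing to the homotopy-colimit machinery as a black box. The paper's route is shorter at the price of leaning on less elementary citations.
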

\begin{proof}
Fix $H$ closed subgroup of $G$, and consider two functors $W : \mathcal{D}^{op}\rightarrow \mathcal{V}$ and $F: \mathcal{D} \rightarrow \mathcal{V}^G$. We want to show that
    \begin{equation}\label{Elm}
    \mathbf{R}\mathcal{V}^G \left( \frac{G}{H}, W \overset{h}{\underset{\mathcal{D}}{\otimes}} F \right) \xrightarrow{iso} W \overset{h}{\underset{\mathcal{D}}{\otimes}} \mathbf{R} \mathcal{V}^G \left( \frac{G}{H}, F \right)
    \end{equation}
    WLOG, we assume that $W$ is cofibrant, and that $F$ is fibrant and cofibrant (we don't need any further assumption of $\frac{G}{H}$, since an orbit is always cofibrant). Then, to show \ref{Elm} is the same as to show that every map of the composition
     \begin{equation}\label{comp}
            W \underset{\mathcal{D}}{\otimes} \left( \mathcal{V}^G \left( \frac{G}{H}, F \right) \right)_c \rightarrow W \underset{\mathcal{D}}{\otimes} \mathcal{V}^G \left( \frac{G}{H}, F \right) \rightarrow \mathcal{V}^G \left( \frac{G}{H}, (W \underset{\mathcal{D}}{\otimes} F)_f \right)
    \end{equation}
  is a weak equivalence.

    By \cite[Lemma 3.8]{Elmendorf}, composition in \ref{comp} is the same, up to isomorphism, as 
    \begin{equation*}
    W \underset{\mathcal{D}}{\otimes} ((F)^H)_c \rightarrow W \underset{\mathcal{D}}{\otimes} (F)^H \rightarrow ((W \underset{\mathcal{D}}{\otimes} F)_f)^H
    \end{equation*}
    
    The functor $(-)^H$ preserves cofibrant objects; hence, $((F)^H)_c = F^H$, and the thesis reduces to show that the map $W \underset{\mathcal{D}}{\otimes} (F)^H \rightarrow ((W \underset{\mathcal{D}}{\otimes} F)_f)^H$ is a weak equivalence.
    
   By looking at the characterization of $W \underset{\mathcal{D}}{\otimes} F$ as a coequalizer (see \cite[Remark 1.2.4]{Loregian}), we have
   \begin{equation*}
       (W \underset{\mathcal{D}}{\otimes} F) = coeq ( \underset{d \rightarrow d'}{\coprod} (Wd' \otimes Fd) \rightrightarrows  \underset{d \in \mathcal{D}}{\coprod} (Wd \otimes Fd) ) \end{equation*}
and therefore 
   \begin{equation*}
       (W \underset{\mathcal{D}}{\otimes} F)^H = coeq ( (\underset{d \rightarrow d'}{\coprod} (Wd' \otimes Fd))^H \rightrightarrows  (\underset{d \in \mathcal{D}}{\coprod} (Wd \otimes Fd))^H) 
       \end{equation*}
       
 Since $G$ a compact Lie group, fixed point functors commute with Bousfield--Kan homotopy colimits (see \cite[Proposition 1.2]{Malkiewich}), and therefore with homotopy colimits (see \cite[Corollary 5.2]{Bousfield}); hence, fixed point functors commute with coproducts, and \ref{step 2} can be rewritten, up to an isomorphism, as:
    \begin{equation}\label{step 2}
       (W \underset{\mathcal{D}}{\otimes} F)^H = coeq (\underset{d \rightarrow d'}{\coprod} (Wd' \otimes Fd)^H \rightrightarrows  \underset{d \in \mathcal{D}}{\coprod} (Wd \otimes Fd)^H ) \end{equation}
   Then, we note that any object $Wd \in \mathcal{V}$ can be seen as a $G$-space where $G$ acts trivially. This means that, for any given closed subgroup $H$ of $G$, 
   \begin{equation} \label{tool}
    (Wd \otimes Fd)^H = \underset{H}{lim} (Wd \otimes Fd) \xrightarrow{iso} Wd \otimes (\underset{H}{lim} Fd) = Wd \otimes (Fd)^H
   \end{equation}
  Using \ref{tool}, from \ref{step 2} we obtain:    \begin{equation*}
       (W \underset{\mathcal{D}}{\otimes} F)^H \xrightarrow{iso} coeq (\underset{d \rightarrow d'}{\coprod} (Wd' \otimes (Fd)^H) \rightrightarrows  \underset{d \in \mathcal{D}}{\coprod} (Wd \otimes (Fd) )^H) \overset{def}{=} W \underset{\mathcal{D}}{\otimes} F^H \end{equation*}
 Since $(-)^H$ preserves fibrant replacements, then the map $(W \otimes F)^H \rightarrow (W \otimes F)_f^H$ is a weak equivalence, and this concludes the proof.
\end{proof}
We can now present a proof of Elmendorf's Theorem based on Corollary \ref{result}:
\begin{proof}[Proof of Theorem \ref{Elm-th}]
Every $x$ in $\mathcal{V}^G$ is weakly equivalent to a $G$-CW-complex (see \cite[Theorem 3.6]{May}), and every $G$-CW-complex has $n$-skeleton given by the pushout of weighted colimits of orbits. Hence, $\mathcal{V}^G = \langle \mathcal{O}_G \rangle $. Moreover, every orbit is fibrant and cofibrant, and, by Lemma \ref{lemma-Elm}, is a homotopy tiny object of the category $\mathcal{V}^G$ equipped with the $G$-fixed model structure. Hence, by Corollary \ref{result}, $\mathcal{V}^G$ and $\mathcal{V}^{\mathcal{O}^{op}_G}$ are Quillen equivalent.
\end{proof}

\subsection{The Schwede--Shipley Theorem}
We first give an overview of the description of stable model categories in terms of presehaves categories presented in \cite{Schwede-Shipley}. Here, after the definition of the category $\mathcal{S}p (\mathcal{C})$ (\cite[Definition 3.6.1]{Schwede-Shipley}, we find:
\begin{Th} \label{SS-preliminar-th1} (\cite[Theorem 3.8.2]{Schwede-Shipley}) Let $\mathcal{C}$ be a simplicial, cofibrantly generated, proper, stable model category. Then $\mathcal{S}p (\mathcal{C})$ supports the stable model
structure and the suspension spectrum functor $\Sigma_{\infty}$ and the evaluation functor $Ev_0$ form a Quillen equivalence between $\mathcal{C}$ and $\mathcal{S}p (\mathcal{C})$ with the stable model structure.
\end{Th}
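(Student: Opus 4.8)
The plan is to follow the standard stabilization argument of \cite{Schwede-Shipley}. First I would equip $\mathcal{S}p(\mathcal{C})$ with the \emph{levelwise} (projective) model structure, in which weak equivalences and fibrations are detected degreewise by the functors $Ev_n$. Its existence is guaranteed because $\mathcal{C}$ is cofibrantly generated and simplicial: one transfers the generating (acyclic) cofibrations of $\mathcal{C}$ along the left adjoints $F_n$ of the $Ev_n$. For this model structure the pair $(\Sigma_{\infty}, Ev_0) = (F_0, Ev_0)$ is visibly a Quillen adjunction, since $Ev_0$ preserves levelwise fibrations and acyclic fibrations by definition.

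Next I would obtain the \emph{stable} model structure as the left Bousfield localization of the levelwise one at the set of maps which force the adjoint structure maps to become weak equivalences, namely the maps $F_{n+1}(\Sigma A)\to F_n A$ with $A$ ranging over the (co)domains of the generating cofibrations of $\mathcal{C}$. This is the point at which the hypotheses that $\mathcal{C}$ be left proper, simplicial and cofibrantly generated are consumed, in order to know the localization exists. The essential output is the identification of the fibrant objects: an object $X$ of $\mathcal{S}p(\mathcal{C})$ is stably fibrant precisely when it is a levelwise-fibrant $\Omega$-spectrum, $X_n\xrightarrow{\sim}\Omega X_{n+1}$ for all $n$, together with an explicit stable fibrant replacement whose $n$th level is weakly equivalent to $\colim_k\,\Omega^k X_{n+k}$. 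One then checks that $(\Sigma_{\infty}, Ev_0)$ survives the localization: cofibrations and acyclic fibrations are unchanged, and $\Sigma_{\infty}$ carries an acyclic cofibration of $\mathcal{C}$ to a levelwise acyclic cofibration of $\mathcal{S}p(\mathcal{C})$, hence to a stable acyclic cofibration, so $\Sigma_{\infty}$ stays left Quillen.

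To upgrade this Quillen adjunction to a Quillen equivalence I would invoke the standard criterion: it suffices to prove that $Ev_0$ reflects weak equivalences between stably fibrant objects, and that the derived unit $A\to Ev_0\big((\Sigma_{\infty}A)_f\big)$ is a weak equivalence for every cofibrant $A$. For the first, a stably fibrant object is an $\Omega$-spectrum, and since $\mathcal{C}$ is stable the functor $\Sigma\colon Ho(\mathcal{C})\to Ho(\mathcal{C})$ is an equivalence with inverse $\Omega$; hence $X_n\simeq\Sigma^n X_0$ for all $n$, so a map $X\to Y$ of $\Omega$-spectra inducing a weak equivalence $X_0\to Y_0$ is a levelwise, hence stable, equivalence. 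For the second, using the explicit fibrant replacement one has $Ev_0\big((\Sigma_{\infty}A)_f\big)\simeq\colim_k\,\Omega^k\Sigma^k A$, and stability says each transition map $\Omega^k\Sigma^k A\to\Omega^{k+1}\Sigma^{k+1}A$ is a weak equivalence, so the canonical map from $A$ (which is the $0$th level $(\Sigma_{\infty}A)_0$) into this colimit is a weak equivalence.

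The main obstacle I anticipate is the middle paragraph: constructing the stable model structure by left Bousfield localization and pinning down both its fibrant objects and the formula for stable fibrant replacement. This is the genuinely technical core, where the simpliciality, cofibrant generation and properness are really needed, and where one must handle carefully the interaction of $\Sigma$, $\Omega$ and sequential colimits in $\mathcal{C}$. Once that structural input is in hand, the two Quillen-equivalence checks are comparatively formal and follow almost immediately from the defining feature of a stable model category, namely the invertibility of $\Sigma$ on $Ho(\mathcal{C})$.
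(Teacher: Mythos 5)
Your outline is correct and follows the standard stabilization argument of Schwede--Shipley (levelwise projective structure transferred along the $F_n \dashv Ev_n$ adjunctions, left Bousfield localization at the maps $F_{n+1}\Sigma A \to F_n A$, identification of stably fibrant objects as $\Omega$-spectra, then the two Quillen-equivalence checks via stability of $\mathcal{C}$), which is precisely the content of the cited reference. The paper itself does not reproduce a proof of this theorem --- it simply refers the reader to \cite[Section 3.8]{Schwede-Shipley} --- so there is nothing in the paper to compare against beyond noting that your sketch is a faithful summary of that reference's argument.
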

\begin{proof}
See \cite[Section 3.8]{Schwede-Shipley}, where the reader can also find the description of the stable model structure.
\end{proof}
Generators are defined using localizing subcategories (cfr \cite[Definition 2.1.2]{Schwede-Shipley}):
\begin{Def}\label{localizing}
A localizing subcategory of $Ho(\mathcal{C})$ is defined as a full triangulated subcategory of $Ho(\mathcal{C})$ which is closed under (homotopy) coproducts.
\end{Def}
\begin{Rem}
Working within the model category $\mathcal{C}$, a localizing subcategory will therefore correspond to a full subcategory closed under weak equivalences (since a triangulated subcategory is closed under isomorphisms) and coproducts.
\end{Rem}
\begin{Def} \label{SS-generatated by}
The localizing subcategory generated by a full subcategory $\mathcal{G}$ of $\mathcal{C}$ is the smallest localizing subcategory of $Ho(\mathcal{C})$ containing $\gamma_{\mathcal{C}}(\mathcal{G})$. We denote by $\langle \mathcal{G} \rangle _L$ the subcategory of $\mathcal{C}$ whose image via $\gamma_{\mathcal{C}}$ is the localizing subcategory generated by $\mathcal{G}$.
\end{Def}
\begin{Def}\label{SS-generators}
$\mathcal{G}$ is a subcategory of generators for $\mathcal{C}$ if $\langle \mathcal{G} \rangle _L = \mathcal{C}$.
\end{Def}
\begin{Rem}
Definition \ref{localizing} is not the same as Definition \ref{generates}. Anyway, if we consider a subcategory $\mathcal{G}$ of generators of $\mathcal{C}$ in the sense of Definition \ref{SS-generators}, then $\langle \mathcal{G} \rangle \supseteq \langle \mathcal{G} \rangle _L = \mathcal{C}$. Hence, the subcategory $\mathcal{G}$ generates $\mathcal{C}$ also in the sense of Definition \ref{generates2}.
\end{Rem}
Compact objects of a stable model category $\mathcal{C}$ are defined in the following way (cfr \cite[Definition 2.1.2]{Schwede-Shipley}):
\begin{Def}
Let $\mathcal{T}$ be a triangulated category with small coproducts. An object $x$ of $\mathcal{T}$ is compact if, for any small diagram $F : \mathcal{D} \rightarrow \mathcal{C}$, the universal map
\begin{equation*}
    \underset{\mathcal{D}}{\coprod} \mathcal{T}( x , F) \rightarrow \mathcal{T} (x, \underset{\mathcal{D}}{\coprod} F )
\end{equation*}
is an isomorphism. An object $x$ of a stable model category is called compact if $\gamma_{\mathcal{C}}x$ is compact in the triangulated homotopy category.
\end{Def}

\cite[Theorem 3.9.3.(iii)]{Schwede-Shipley} asserts:
\begin{Th}\label{SS-preliminar-th2}
Let $\mathcal{C}$ be a spectral model category,  $\mathcal{G}$ the full subcategory of compact, fibrant and cofibrant generators for $\mathcal{C}$, $Sp^{\Sigma}$ the category of symmetric spectra, and let
$ - \underset{\mathcal{G}}{\wedge} J $ and $Hom(\mathcal{G}, -)$ defined as the adjoint functors of Proposition \ref{adjunction}. Then the pair of functors
\begin{equation*}
    - \underset{\mathcal{G}}{\wedge} J : (Sp^{\Sigma})^{\mathcal{G}^{op}} \rightleftarrows \mathcal{C} : Hom(\mathcal{G} , -)
\end{equation*}
forms a Quillen equivalence.
\end{Th}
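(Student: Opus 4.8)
The plan is to derive Theorem \ref{SS-preliminar-th2} from Corollary \ref{result} by verifying its hypotheses for the subcategory $\mathcal{G}$ of compact, fibrant and cofibrant generators of the spectral model category $\mathcal{C}$, with $\mathcal{V} = Sp^{\Sigma}$. There are essentially two things to check: first, that $\mathcal{G}$ generates $\mathcal{C}$ in the sense of Definition \ref{generates2}, i.e.\ that $\langle \mathcal{G} \rangle = \mathcal{C}$; and second, that every object of $\mathcal{G}$ is homotopy tiny in the sense of Definition \ref{homotopy-tiny}. The first point is almost immediate from the remarks following Definition \ref{SS-generators}: by hypothesis $\mathcal{G}$ is a subcategory of generators in the Schwede--Shipley sense, so $\langle \mathcal{G} \rangle_L = \mathcal{C}$, and since $\langle \mathcal{G} \rangle \supseteq \langle \mathcal{G} \rangle_L$ we get $\langle \mathcal{G} \rangle = \mathcal{C}$. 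One should also note that the left adjoint functor $- \underset{\mathcal{G}}{\wedge} J$ appearing in the statement is exactly the functor $- \underset{\mathcal{G}}{\otimes} J$ of Proposition \ref{adjunction}, written with the smash-product notation customary for symmetric spectra, and likewise $Hom(\mathcal{G},-)$ is the functor of Proposition \ref{adjunction}; so once the hypotheses of Corollary \ref{result} are met, the Quillen equivalence it produces is literally the pair in the statement.

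The substantive step is showing that a compact, fibrant, cofibrant object $x$ of $\mathcal{C}$ is homotopy tiny. By Remark \ref{compute-hwcolim-preservation} this amounts to checking that, for every small $\mathcal{D}$ and every pair $(W,F)$ with $W : \mathcal{D}^{op} \to Sp^{\Sigma}$ and $F : \mathcal{D} \to \mathcal{C}$, each map in the zig zag \ref{htiny-in-model-ct} is a weak equivalence; since $x$ is already fibrant and cofibrant we may take $W$ cofibrant and $F$ pointwise fibrant-cofibrant and reduce to showing the single comparison map $W \underset{\mathcal{D}}{\otimes} \mathcal{C}(x, F) \to \mathcal{C}(x, W \underset{\mathcal{D}}{\otimes} F)$ is a weak equivalence. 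The strategy is to reduce a general homotopy weighted colimit to homotopy colimits of the underlying diagram and then, through the bar/coequalizer presentation of a weighted colimit (as in \cite[Remark 1.2.4]{Loregian}), to coproducts, in the same spirit as the proof of Lemma \ref{lemma-Elm}. Concretely: $\mathcal{C}(x,-) : \mathcal{C} \to Sp^{\Sigma}$ is a right Quillen functor (its left adjoint is $x \wedge - : Sp^{\Sigma} \to \mathcal{C}$, which is left Quillen by the axioms of a spectral model category), so its derived functor is triangulated and commutes with suspension; compactness of $x$ says precisely that the derived $\mathcal{C}(x,-)$ commutes with arbitrary coproducts; and a triangulated functor that commutes with coproducts commutes with all homotopy colimits built from coproducts and cofibre sequences, in particular with the geometric realization of the bar construction computing $W \underset{\mathcal{D}}{\otimes^h} F$. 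Tensoring with the cofibrant weight $W$ is handled levelwise because $x \wedge -$ preserves the relevant tensors by the pushout-product axiom.

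I expect the main obstacle to be making the reduction from ``commutes with coproducts and cofibre sequences'' to ``commutes with the homotopy weighted colimit'' fully rigorous at the level of model categories rather than $\infty$-categories: one has to present $W \underset{\mathcal{D}}{\otimes^h} F$ explicitly, e.g.\ via the two-sided simplicial bar construction $B(W, \mathcal{D}, F)$, identify its realization with a filtered homotopy colimit of pushouts of coproducts of objects of the form $Wd \wedge x$ (after applying $\mathcal{C}(x,-)$, of the form $Wd \wedge \mathcal{C}(x, Fd)$), and then invoke compactness plus the triangulated structure skeleton by skeleton. A cleaner alternative, which I would pursue if the bookkeeping becomes heavy, is to cite directly the Schwede--Shipley proof of \cite[Theorem 3.9.3]{Schwede-Shipley}, where exactly this preservation property (``$\mathcal{C}(x,-)$ commutes with the relevant homotopy colimits when $x$ is compact'') is established; one then only needs to observe that their hypothesis and conclusion translate, under the dictionary above, into ``$x$ is homotopy tiny'' in the sense of Definition \ref{homotopy-tiny}. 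Everything else---the verification that the projective model structure on $(Sp^{\Sigma})^{\mathcal{G}^{op}}$ is the one appearing in Corollary \ref{result}, and that $\mathcal{C}$ being generated by $\mathcal{G}$ lets Corollary \ref{result} apply verbatim---is routine.

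\begin{proof}[Proof of Theorem \ref{SS-preliminar-th2}]
We apply Corollary \ref{result} with $\mathcal{V} = Sp^{\Sigma}$. The left adjoint $- \underset{\mathcal{G}}{\wedge} J$ and the right adjoint $Hom(\mathcal{G}, -)$ in the statement are, by construction, the functors $- \underset{\mathcal{G}}{\otimes} J$ and $Hom(\mathcal{G}, -)$ of Proposition \ref{adjunction}, the smash notation being the usual one for $Sp^{\Sigma}$-enrichment; so it suffices to check that $\mathcal{G}$ satisfies the hypotheses of Corollary \ref{result}.

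First, $\mathcal{G}$ generates $\mathcal{C}$ in the sense of Definition \ref{generates2}. By hypothesis $\mathcal{G}$ is a subcategory of generators for $\mathcal{C}$ in the sense of Definition \ref{SS-generators}, i.e.\ $\langle \mathcal{G} \rangle_L = \mathcal{C}$; since a homotopy colimit built from coproducts and cofibre sequences is in particular a homotopy weighted colimit, $\langle \mathcal{G} \rangle \supseteq \langle \mathcal{G} \rangle_L = \mathcal{C}$, hence $\langle \mathcal{G} \rangle = \mathcal{C}$.

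Second, every object $x$ of $\mathcal{G}$ is homotopy tiny. By hypothesis $x$ is fibrant and cofibrant, so by Remark \ref{compute-hwcolim-preservation} we must show that, for every small $\mathcal{D}$ and every $W : \mathcal{D}^{op} \to Sp^{\Sigma}$, $F : \mathcal{D} \to \mathcal{C}$, each map of the zig zag \ref{htiny-in-model-ct} is a weak equivalence; taking $W$ cofibrant and $F$ pointwise fibrant and cofibrant, the first two maps of \ref{htiny-in-model-ct} are identities up to weak equivalence and the claim reduces to the comparison map
\begin{equation*}
W \underset{\mathcal{D}}{\otimes} \mathcal{C}(x, F) \longrightarrow \mathcal{C}\!\left(x, W \underset{\mathcal{D}}{\otimes} F\right)
\end{equation*}
being a weak equivalence. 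Now $\mathcal{C}(x, -) : \mathcal{C} \to Sp^{\Sigma}$ is a right Quillen functor, its left adjoint $x \wedge - : Sp^{\Sigma} \to \mathcal{C}$ being left Quillen by the spectral analogue of Definition \ref{SP}; hence its total right derived functor is an exact functor of triangulated homotopy categories, commuting with suspension and cofibre sequences. Compactness of $x$ states exactly that this derived functor commutes with arbitrary coproducts. An exact functor between triangulated categories that commutes with coproducts commutes with every homotopy colimit assembled from coproducts and cofibre sequences; presenting $W \underset{\mathcal{D}}{\otimes} F$ through the two-sided bar construction $B(W, \mathcal{D}, F)$, whose geometric realization is a sequential homotopy colimit of pushouts of coproducts of objects $Wd \wedge (Fd')$, and using that $x \wedge -$ preserves these tensors and pushouts by the pushout-product axiom, one obtains that the comparison map above is a weak equivalence skeleton by skeleton. (Alternatively, this preservation is precisely the content of the relevant part of the proof of \cite[Theorem 3.9.3]{Schwede-Shipley}.) Thus $x$ is homotopy tiny.

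Since $\mathcal{G}$ is small, full, consists of fibrant, cofibrant, homotopy tiny objects and generates $\mathcal{C}$, Corollary \ref{result} applies and gives that $- \underset{\mathcal{G}}{\wedge} J : (Sp^{\Sigma})^{\mathcal{G}^{op}} \rightleftarrows \mathcal{C} : Hom(\mathcal{G}, -)$ is a Quillen equivalence.
\end{proof}
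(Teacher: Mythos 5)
The paper does not prove Theorem~\ref{SS-preliminar-th2}: its proof is literally the citation ``See \cite[Theorem 3.9.3]{Schwede-Shipley}.'' The theorem is taken as an external input from the literature (it is later re-derived implicitly via the paper's proof of Theorem~\ref{SS-th}, which goes through Propositions~\ref{Prop1-SS} and~\ref{Prop2-SS}). So your choice to prove it from scratch via Corollary~\ref{result} is a genuinely different route from the paper's proof of this particular statement, though it mirrors the paper's approach to the culminating Theorem~\ref{SS-th}. Your reduction $\langle\mathcal{G}\rangle \supseteq \langle\mathcal{G}\rangle_L = \mathcal{C}$ is correct and matches the remark following Definition~\ref{SS-generators}.

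However, your proof of the substantive step --- that a compact, fibrant, cofibrant object is homotopy tiny --- has a real gap. Your bar-construction argument reduces the homotopy weighted colimit to homotopy colimits built from coproducts and cofibre sequences, and uses compactness for the coproduct part and exactness for the cofibre part. But before any of this, you silently commute $\mathcal{C}(x,-)$ past the levelwise tensoring by the weight: you write the skeletal pieces after applying $\mathcal{C}(x,-)$ as ``$Wd \wedge \mathcal{C}(x, Fd)$,'' which presupposes that the natural map $\sigma \otimes \mathcal{C}(x,y) \to \mathcal{C}(x, \sigma \wedge y)$ is a weak equivalence for $\sigma \in Sp^{\Sigma}$. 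This is \emph{not} a formal consequence of the pushout-product axiom (which only controls cofibration behavior), and it is false in an unstable setting; it is precisely where the stability of $\mathcal{C}$ enters. The paper isolates this as Lemma~\ref{Lemma1-SS} (``all objects of $Sp^{\Sigma}$ are good'') and proves it by showing $S^{1}$ and $S^{-1}$ are good --- which uses the suspension/loop equivalence on $Ho(\mathcal{C})$ --- and then closing under weak equivalences and tensor products. Your proof, as written, waves at ``$x\wedge -$ preserves these tensors and pushouts by the pushout-product axiom'' but never justifies the comparison map, and your fallback of citing \cite[Theorem 3.9.3]{Schwede-Shipley} for the preservation property is circular in spirit, since that is exactly the theorem being proved. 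To close the gap you would need either to reproduce the argument of Lemma~\ref{Lemma1-SS} or replace it with an equivalent derivation of the tensoring compatibility from stability.
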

\begin{proof}
See \cite[Theorem 3.9.3]{Schwede-Shipley}.
\end{proof}
Using Theorem \ref{SS-preliminar-th1} and Theorem \ref{SS-preliminar-th2}, one could derive:
\begin{Th} [Schwede--Shipley's Theorem] \cite[Theorem 3.3.3]{Schwede-Shipley} \label{SS-th}
Let $\mathcal{C}$ be a simplicial, cofibrantly generated, proper, stable model category with a set $\mathcal{G}$ of fibrant, cofibrant, compact generators. Then there
exists a chain of simplicial Quillen equivalences between $\mathcal{C}$ and the model category $Sp(\Sigma)^{(\mathcal{G})^{op}}$.
\end{Th}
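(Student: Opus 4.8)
The plan is to factor the comparison through the category $\mathcal{S}p(\mathcal{C})$ of spectra in $\mathcal{C}$: compare $\mathcal{C}$ with $\mathcal{S}p(\mathcal{C})$ using Theorem \ref{SS-preliminar-th1}, and then compare $\mathcal{S}p(\mathcal{C})$ with $(Sp^{\Sigma})^{\mathcal{G}^{op}}$ using Corollary \ref{result} (equivalently, citing Theorem \ref{SS-preliminar-th2}). The intermediate category $\mathcal{S}p(\mathcal{C})$ is indispensable here because the $\mathcal{C}$ of Theorem \ref{SS-th} is only simplicial, whereas both Corollary \ref{result} and Theorem \ref{SS-preliminar-th2} need a genuinely spectral ambient category.

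First I would apply Theorem \ref{SS-preliminar-th1}: since $\mathcal{C}$ is simplicial, cofibrantly generated, proper and stable, $\mathcal{S}p(\mathcal{C})$ carries the stable model structure and $(\Sigma_{\infty}, Ev_0)$ is a simplicial Quillen equivalence $\mathcal{C}\rightleftarrows\mathcal{S}p(\mathcal{C})$. It is known from \cite{Schwede-Shipley} that $\mathcal{S}p(\mathcal{C})$ with the stable model structure is a spectral model category, hence an $Sp^{\Sigma}$-model category in the sense of Definition \ref{SP}, so I would set $\mathcal{V}=Sp^{\Sigma}$. Next I would transport the generators. Both $\mathcal{C}$ and $\mathcal{S}p(\mathcal{C})$ being stable, their homotopy categories are triangulated, and the derived equivalence $\mathbf{L}\Sigma_{\infty}$ carries $\mathcal{G}$ to a set of objects of $\mathcal{S}p(\mathcal{C})$ that again generates the homotopy category in the sense of Definition \ref{SS-generators} and consists of compact objects, both being properties formulated purely in terms of coproducts and triangles. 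Replacing those objects by fibrant-cofibrant models, which by the Remark following Theorem \ref{Th} changes neither the generated subcategory nor the homotopy type, and letting $\mathcal{G}_0$ be the small full $Sp^{\Sigma}$-enriched subcategory of $\mathcal{S}p(\mathcal{C})$ they span, I obtain a subcategory of compact, fibrant and cofibrant objects generating $\mathcal{S}p(\mathcal{C})$ in the sense of Definition \ref{SS-generators}, and hence, by the remark that such a generating set is also generating in the sense of Definition \ref{generates2}, in the sense of Definition \ref{generates2} as well; since $\mathcal{G}_0$ is equivalent to $\mathcal{G}$ as an $Sp^{\Sigma}$-category, $(Sp^{\Sigma})^{\mathcal{G}_0^{op}}$ is equivalent to $(Sp^{\Sigma})^{\mathcal{G}^{op}}$.

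The heart of the argument is the spectral analogue of Lemma \ref{lemma-Elm}: every compact, fibrant, cofibrant object $x$ of the stable $Sp^{\Sigma}$-model category $\mathcal{S}p(\mathcal{C})$ is homotopy tiny. To prove it I would fix $W:\mathcal{D}^{op}\to Sp^{\Sigma}$ and $F:\mathcal{D}\to\mathcal{S}p(\mathcal{C})$ and, by Remark \ref{compute-hwcolim-preservation}, check that the approximation $\mathcal{S}p(\mathcal{C})(x,(-)_f)$ sends every map of the zig-zag \ref{htiny-in-model-ct}, formed in $\mathcal{S}p(\mathcal{C})$, to a weak equivalence. The strategy is that, $\mathcal{S}p(\mathcal{C})$ being stable, the derived mapping-spectrum functor $\mathbf{R}\mathcal{S}p(\mathcal{C})(x,-)$ is exact, so it preserves (de)suspensions and finite homotopy colimits, and, $x$ being compact, it preserves arbitrary homotopy coproducts; then I would write $W_c\underset{\mathcal{D}}{\otimes}F_c$ as a homotopy coequalizer of homotopy coproducts of tensors $W_c d'\otimes F_c d$, as in the coequalizer description used in the proof of Lemma \ref{lemma-Elm}, and use that each symmetric spectrum $W_c d'$ is a homotopy colimit of (de)suspensions of the sphere spectrum, so that the copower $-\otimes W_c d'$ is itself a homotopy colimit of (de)suspensions. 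This reduces $W_c\underset{\mathcal{D}}{\otimes}F_c$ to an iterated homotopy colimit of suspensions of the objects $F_c d$, all of which $\mathbf{R}\mathcal{S}p(\mathcal{C})(x,-)$ commutes with. This is the step I expect to be the main obstacle, and it is strictly harder than its Elmendorf counterpart: there the weight values $Wd$ were spaces carrying trivial action and the fixed-point functor absorbed them through a clean isomorphism $(Wd\otimes Fd)^H\cong Wd\otimes (Fd)^H$, whereas here there is no such isomorphism and one must instead decompose the \emph{spectral} weight into cells and verify that exactness together with compactness really suffices to push $\mathbf{R}\mathcal{S}p(\mathcal{C})(x,-)$ through the entire construction; the stability and cofibrant-generation hypotheses on $\mathcal{C}$ are exactly what make this go through.

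Finally, with $\mathcal{G}_0$ now known to consist of fibrant, cofibrant, homotopy tiny objects generating $\mathcal{S}p(\mathcal{C})$, Corollary \ref{result} applies to the $Sp^{\Sigma}$-model category $\mathcal{S}p(\mathcal{C})$ and produces a Quillen equivalence between $\mathcal{S}p(\mathcal{C})$ and $(Sp^{\Sigma})^{\mathcal{G}_0^{op}}$, equivalently $(Sp^{\Sigma})^{\mathcal{G}^{op}}$; alternatively one may simply invoke Theorem \ref{SS-preliminar-th2} at this point. Composing with the simplicial Quillen equivalence $\mathcal{C}\rightleftarrows\mathcal{S}p(\mathcal{C})$ of the first step, and observing that all the functors involved are simplicial, yields the required chain of simplicial Quillen equivalences between $\mathcal{C}$ and $(Sp^{\Sigma})^{\mathcal{G}^{op}}$.
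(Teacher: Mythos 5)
Your route is essentially the paper's: pass from the simplicial $\mathcal{C}$ to the spectral $\mathcal{S}p(\mathcal{C})$ via Theorem \ref{SS-preliminar-th1}, prove that cofibrant compact objects of the stable $Sp^{\Sigma}$-model category $\mathcal{S}p(\mathcal{C})$ are homotopy tiny, and then apply Corollary \ref{result} with $\mathcal{V}=Sp^{\Sigma}$. In one respect you are more explicit than the paper's own proof, which reads merely ``straight from Corollary \ref{Corollary-SS} and Corollary \ref{result}'' and never names $\mathcal{S}p(\mathcal{C})$; since the $\mathcal{C}$ of Theorem \ref{SS-th} is only simplicial while Corollary \ref{result} is being invoked with $\mathcal{V}=Sp^{\Sigma}$, the passage to $\mathcal{S}p(\mathcal{C})$ and the transport of $\mathcal{G}$ along $\mathbf{L}\Sigma_{\infty}$ (preserving compactness and generation, these being triangulated invariants) is genuinely required, and you spell it out correctly.

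Where you diverge from the paper is in the argument for the crucial implication ``cofibrant compact $\Rightarrow$ homotopy tiny,'' which is the paper's Proposition \ref{Prop2-SS}. The paper establishes it through two supporting results: Lemma \ref{Lemma1-SS}, a bootstrap on ``good'' weights $\sigma\in Sp^{\Sigma}$ showing that $S^{-1},S^{0},S^{1}$ are good and that goodness is closed under weak equivalence and smash, and Lemma \ref{Lemma2-SS}, which uses Lurie's criterion in the associated $\infty$-category to deduce that $\mathbf{R}\mathcal{C}(x,-)$ preserves all homotopy colimits from the facts that it preserves homotopy pushouts (stability) and homotopy coproducts (compactness). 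You propose instead to decompose each weight value $W_cd'$ into a homotopy colimit of (de)suspensions of the sphere spectrum and commute $\mathbf{R}\mathcal{S}p(\mathcal{C})(x,-)$ past the whole weighted colimit using exactness plus compactness. These are two presentations of the same cell-decomposition idea, and your version is reasonable, but you explicitly leave it as a sketch; to close it you would still need essentially the content of Lemma \ref{Lemma2-SS} (finite homotopy colimits together with homotopy coproducts give all homotopy colimits) and a check that your filtration of the weight is compatible with the zig-zag of Remark \ref{compute-hwcolim-preservation}. As written, that portion of your argument is an honest outline rather than a complete proof, whereas the paper carries that load in the two lemmas just cited.
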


In the following, we will give a new proof, based on Corollary \ref{result}, of Theorem \ref{SS-th}, that we rephrase as:
\begin{Th}\label{SS-rephrased}
Let $\mathcal{C}$ be a simplicial, cofibrantly generated, proper, stable model category with a set $\mathcal{G}$ of compact, fibrant and cofibrant generators. Then $\mathcal{C}$ is Quillen equivalent to the category $Sp(\Sigma)^{(\mathcal{G})^{op}}$.
\end{Th}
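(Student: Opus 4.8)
The plan is to derive Theorem \ref{SS-rephrased} from Corollary \ref{result}. The obstruction is that $\mathcal{C}$ is only assumed simplicial, so $Sp^{\Sigma}$ does not a priori enrich it; the fix is to replace $\mathcal{C}$ by the category $\mathcal{S}p(\mathcal{C})$ of spectra in $\mathcal{C}$. By Theorem \ref{SS-preliminar-th1} the hypotheses on $\mathcal{C}$ (simplicial, cofibrantly generated, proper, stable) ensure that $\mathcal{S}p(\mathcal{C})$ carries the stable model structure and that $(\Sigma_{\infty}, Ev_0)$ is a Quillen equivalence $\mathcal{C} \rightleftarrows \mathcal{S}p(\mathcal{C})$; moreover $\mathcal{S}p(\mathcal{C})$ is a spectral, hence $Sp^{\Sigma}$-enriched, model category, and it is stable since $\mathcal{C}$ is. Thus it suffices to show that $\mathcal{S}p(\mathcal{C})$ is Quillen equivalent to $(Sp^{\Sigma})^{\mathcal{G}^{op}}$, and for that I would apply Corollary \ref{result} with $\mathcal{V} = Sp^{\Sigma}$, with $\mathcal{S}p(\mathcal{C})$ in the role of $\mathcal{C}$, and with $\mathcal{C}_0$ the full $Sp^{\Sigma}$-subcategory of $\mathcal{S}p(\mathcal{C})$ on a chosen set of fibrant and cofibrant objects modelling $\mathbf{L}\Sigma_{\infty}$ applied to the generators in $\mathcal{G}$.

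Next I would check the hypotheses of Corollary \ref{result}. Smallness of $\mathcal{C}_0$ is clear since $\mathcal{G}$ is a set; its objects are fibrant and cofibrant by construction, and by the Remark following Theorem \ref{Th} replacing the generators by fibrant-cofibrant objects does not change the generated closed subcategory. For generation: the derived functors of a Quillen equivalence form an exact equivalence of the triangulated homotopy categories, so they carry the compact generators of $Ho(\mathcal{C})$ to compact generators of $Ho(\mathcal{S}p(\mathcal{C}))$; hence $\mathcal{C}_0$ generates $\mathcal{S}p(\mathcal{C})$ as a localizing subcategory, and by the Remark comparing Definition \ref{localizing} with Definition \ref{generates2} it follows that $\langle\mathcal{C}_0\rangle = \mathcal{S}p(\mathcal{C})$. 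Finally, $\mathcal{C}_0$ is $Sp^{\Sigma}$-equivalent to $\mathcal{G}$ (with the spectral enrichment it inherits inside $\mathcal{S}p(\mathcal{C})$ as in \cite{Schwede-Shipley}), so $(Sp^{\Sigma})^{\mathcal{C}_0^{op}}$ and $(Sp^{\Sigma})^{\mathcal{G}^{op}}$ are Quillen equivalent; combined with $\mathcal{C} \rightleftarrows \mathcal{S}p(\mathcal{C})$ and the conclusion of Corollary \ref{result}, this produces the chain of Quillen equivalences asserted by the theorem.

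The one substantial point is to check that the objects of $\mathcal{C}_0$ are homotopy tiny, i.e.\ that \emph{compact objects of a stable (spectral) model category are homotopy tiny}. The key remark is that for cofibrant $x \in \mathcal{S}p(\mathcal{C})$ the derived mapping-spectrum functor $\mathbf{R}\,\mathcal{S}p(\mathcal{C})(x,-) : Ho(\mathcal{S}p(\mathcal{C})) \to Ho(Sp^{\Sigma})$ is an exact functor between triangulated categories, and it preserves homotopy coproducts precisely when $x$ is compact. It then remains to see that every homotopy weighted colimit in $\mathcal{S}p(\mathcal{C})$ is, up to weak equivalence, built out of coproducts, shifts, homotopy cofibre sequences and sequential homotopy colimits: presenting $W \underset{\mathcal{D}}{\otimes^h} F$ by the two-sided bar construction exhibits it as the realization of a simplicial object whose levels are coproducts of terms $Wd_{n} \otimes Fd_{0}$; the realization is the sequential homotopy colimit of its skeletal filtration, with successive stages glued by homotopy pushouts; and tensoring with a cofibrant symmetric spectrum $Wd$ is itself a homotopy colimit of shifted copies of the identity. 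An exact, coproduct-preserving functor of triangulated categories preserves all of these operations, so $\mathbf{R}\,\mathcal{S}p(\mathcal{C})(x,-)$ preserves homotopy weighted colimits, as Definition \ref{homotopy-tiny} requires. I expect the main work to lie in making this bar-construction bookkeeping precise and matching it against the explicit zig-zag of Remark \ref{compute-hwcolim-preservation}.
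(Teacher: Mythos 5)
Your proof is correct in outline and takes a genuinely different route from the paper for the crucial step (showing that compact objects are homotopy tiny), while also being more explicit about an intermediate step the paper leaves implicit.

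The paper proves the theorem as ``straight from Corollary~\ref{Corollary-SS} and Corollary~\ref{result}''. This presupposes $\mathcal{C}$ is $Sp^{\Sigma}$-enriched (Corollary~\ref{result} needs a $\mathcal{V}$-model category), whereas the hypothesis only says $\mathcal{C}$ is simplicial; the passage to the spectral model category $\mathcal{S}p(\mathcal{C})$ via Theorem~\ref{SS-preliminar-th1} is implicit in the paper's one-line proof. You spell this out carefully, including the transport of the generators along $\mathbf{L}\Sigma_{\infty}$, the closure argument via the Remark after Theorem~\ref{Th}, and the comparison of Definitions~\ref{localizing} and~\ref{generates2}. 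This is a useful clarification.

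Where you genuinely diverge is in establishing that cofibrant compact objects are homotopy tiny (the content of Proposition~\ref{Prop2-SS}). The paper proceeds through two lemmas: Lemma~\ref{Lemma2-SS} invokes $\infty$-categorical machinery (the cited result of Lurie) to show that an exact, coproduct-preserving functor on a stable $\infty$-category preserves all homotopy colimits, and Lemma~\ref{Lemma1-SS} handles the interaction of $\mathcal{C}(x,-)$ with the weight via an inductive argument over a class of ``good'' symmetric spectra (closed under weak equivalences and smash products, containing $S^0, S^1, S^{-1}$). You instead present $W \underset{\mathcal{D}}{\otimes^h} F$ by the two-sided bar construction, observe that the realization is a sequential homotopy colimit of homotopy pushouts of coproducts, reduce the tensoring-with-a-weight step to a cell filtration of the cofibrant spectrum $Wd$, and then use that an exact, coproduct-preserving triangulated functor preserves all of these. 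Your approach buys more elementary self-containment---no appeal to Lurie---at the cost of the bar-construction bookkeeping you acknowledge, and of matching the filtration argument against the specific zig-zag of Definition~\ref{def-preserve-hcolim} and Remark~\ref{compute-hwcolim-preservation}; the paper's approach outsources the ``all homotopy colimits from pushouts and coproducts'' step to the $\infty$-categorical literature, but requires a separate inductive argument (the goodness lemma) to control the weight, and that induction as written does not obviously exhaust all of $Sp^{\Sigma}$ (closure under smash and weak equivalence alone does not give all objects; some form of colimit-closure, which your cell-filtration approach handles directly, is also needed). Both routes get you there; yours is arguably more robust on this point.
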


To prove Theorem \ref{SS-rephrased}, we first show that, in a stable model category $\mathcal{C}$, any cofibrant object $x$ is compact if and only if it is homotopy tiny.

\begin{prop}\label{Prop1-SS}
Let $\mathcal{C}$ be a stable model category. Then every homotopy tiny object is compact.
\end{prop}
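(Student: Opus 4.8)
The plan is to exhibit an arbitrary coproduct as a particularly simple homotopy weighted colimit and then to deduce compactness by passing to underlying hom-sets. Let $x$ be homotopy tiny. Since the right approximation of $\mathbf{R}\mathcal{C}(x,-)$ is $\mathcal{C}(x_c,(-)_f)$, being homotopy tiny is invariant under weak equivalence and depends only on a cofibrant replacement of $x$, so I may assume $x$ is fibrant and cofibrant; likewise I may replace every object of any chosen family by a fibrant--cofibrant one without affecting its class in $Ho(\mathcal{C})$. Fix a set $I$ and a family $(y_i)_{i\in I}$ of fibrant--cofibrant objects, view $I$ as a discrete category $\mathcal{D}$, take $F\colon\mathcal{D}\to\mathcal{C}$, $i\mapsto y_i$, and let $W\colon\mathcal{D}^{op}\to\mathcal{V}$ be constant at $\mathbb{I}$. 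Then $W\underset{\mathcal{D}}{\otimes}F\cong\coprod_i y_i$; since $\mathcal{D}$ is discrete, $W$ is cofibrant in the projective model structure on $\mathcal{V}^{\mathcal{D}^{op}}$ (it is objectwise $\mathbb{I}$, which is cofibrant), and $F$ is already objectwise cofibrant, so Equation \ref{hwcolimits} gives $W\underset{\mathcal{D}}{\otimes^h}F=\gamma_{\mathcal{C}}(\coprod_i y_i)$. As a coproduct of cofibrant objects is cofibrant and computes the coproduct in $Ho(\mathcal{C})$, this homotopy weighted colimit is the coproduct $\bigoplus_i y_i$ in the triangulated category $Ho(\mathcal{C})$.

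Next I would feed this pair $(W,F)$ into the homotopy tininess of $x$. Unwinding Definition \ref{def-preserve-hcolim}, equivalently the zig--zag \ref{htiny-in-model-ct} of Remark \ref{compute-hwcolim-preservation}, for this weight and diagram, and using that $W$ is constant at $\mathbb{I}$, one obtains that every map in
\begin{equation*}
\coprod_{i}\bigl(\mathcal{C}(x,y_i)\bigr)_c\;\longrightarrow\;\coprod_{i}\mathcal{C}(x,y_i)\;\longrightarrow\;\mathcal{C}\Bigl(x,\bigl(\textstyle\coprod_i y_i\bigr)_f\Bigr)
\end{equation*}
is a weak equivalence in $\mathcal{V}$. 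Running the same "a coproduct of cofibrant objects computes the coproduct in the homotopy category" argument inside $\mathcal{V}$, this says precisely that the canonical comparison map $\bigoplus_i\mathbf{R}\mathcal{C}(x,y_i)\to\mathbf{R}\mathcal{C}\bigl(x,\bigoplus_i y_i\bigr)$ is an isomorphism in $Ho(\mathcal{V})$.

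Finally I would descend from derived mapping objects to morphism groups. For a $\mathcal{V}$-model category there is a natural isomorphism $Ho(\mathcal{C})(x,z)\cong Ho(\mathcal{V})\bigl(\mathbb{I},\mathbf{R}\mathcal{C}(x,z)\bigr)$ realising the usual $Ho(\mathcal{V})$-enrichment of $Ho(\mathcal{C})$; applying $Ho(\mathcal{V})(\mathbb{I},-)$ to the previous isomorphism and commuting it past the coproduct yields
\begin{equation*}
Ho(\mathcal{C})\Bigl(x,\bigoplus_i y_i\Bigr)\cong Ho(\mathcal{V})\Bigl(\mathbb{I},\bigoplus_i\mathbf{R}\mathcal{C}(x,y_i)\Bigr)\cong\bigoplus_i Ho(\mathcal{V})\bigl(\mathbb{I},\mathbf{R}\mathcal{C}(x,y_i)\bigr)\cong\bigoplus_i Ho(\mathcal{C})(x,y_i),
\end{equation*}
and tracing through the identifications shows the composite is the canonical map, so $\gamma_{\mathcal{C}}x$ is compact. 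The step that genuinely needs justification — and the place where the argument could break for a badly behaved $\mathcal{V}$ — is the middle isomorphism above: it asserts that $Ho(\mathcal{V})(\mathbb{I},-)$ sends coproducts to direct sums, i.e.\ that the unit $\mathbb{I}$ is a compact object of $Ho(\mathcal{V})$. For the enriching categories relevant to the applications (simplicial sets, symmetric spectra) this is classical ($\pi_0$ of a disjoint union, respectively of a wedge of spectra), and more generally it should be recorded as a standing hypothesis on $\mathcal{V}$; everything else is formal manipulation of weighted (co)limits and of coproducts of cofibrant objects.
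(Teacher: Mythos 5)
Your argument is the same one the paper uses --- realise a coproduct as the homotopy weighted colimit of a discrete diagram with weight constant at $\mathbb{I}$ and feed this into homotopy tininess --- but you carry out the final descent with more care than the paper does, and in doing so you isolate a real hidden assumption. The paper's proof passes directly from the fact that $\mathbf{R}\mathcal{C}(x,-)$ preserves homotopy coproducts, which is a statement about $Ho(\mathcal{V})$-valued derived mapping objects, to the isomorphism $\coprod_i Ho(\mathcal{C})(\gamma_{\mathcal{C}}x,\gamma_{\mathcal{C}}y_i)\to Ho(\mathcal{C})(\gamma_{\mathcal{C}}x,\coprod_i\gamma_{\mathcal{C}}y_i)$ of hom-\emph{sets} that defines compactness; as you observe, this descent uses that $Ho(\mathcal{V})(\mathbb{I},-)$ commutes with coproducts, i.e.\ that $\mathbb{I}$ is compact in $Ho(\mathcal{V})$. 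This holds for the enriching categories that actually appear in the applications (simplicial sets, symmetric spectra), but it is not recorded as a hypothesis in the statement or in the paper's proof, so you have located a genuine, if minor, gap. Your preparatory manipulations (cofibrancy of the constant-$\mathbb{I}$ weight over a discrete category, cofibrancy of coproducts of cofibrant objects, and the identification of such a coproduct with the homotopy coproduct in $Ho(\mathcal{C})$) are all correct and spelled out with the right level of detail.
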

\begin{proof}
In a stable model category $\mathcal{C}$, homotopy coproducts are image of coproducts under $\gamma_{\mathcal{C}}$; hence, given a homotopy tiny object $x$, since coproducts are special case of weighted colimits,
\begin{equation*}
    Ho(\mathcal{C})(\gamma_{\mathcal{C}} x , \coprod \gamma_{\mathcal{C}^{\mathcal{D}}} D) \xrightarrow{iso} \coprod Ho(\mathcal{C}) (\gamma_{\mathcal{C}} x , \gamma_{\mathcal{C}^{\mathcal{D}}}D)
\end{equation*}
which implies that $x$ is, by definition, a compact object.
\end{proof}

\begin{prop}\label{Prop2-SS}
Let $\mathcal{C}$ be a stable model category, and let $x$ be a cofibrant compact object. Then $x$ is homotopy tiny.
\end{prop}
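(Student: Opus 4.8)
The plan is to verify the criterion of Remark~\ref{compute-hwcolim-preservation}: for every small category $\mathcal{D}$ and every pair $F\colon\mathcal{D}\to\mathcal{C}$, $W\colon\mathcal{D}^{op}\to\mathcal{V}$, all the maps of the zig-zag~\ref{htiny-in-model-ct} must be weak equivalences. Writing $G\coloneqq\mathbf{R}\mathcal{C}(x,-)$ and reducing exactly as in the proof of Lemma~\ref{lemma-Elm} (we may assume $W$ cofibrant and $F$ pointwise fibrant and cofibrant; $x$ is already cofibrant), this collapses to showing that both maps of
\[
W\underset{\mathcal{D}}{\otimes}\bigl(\mathcal{C}(x,F(-))\bigr)_c\longrightarrow W\underset{\mathcal{D}}{\otimes}\mathcal{C}(x,F(-))\longrightarrow\mathcal{C}\bigl(x,(W\underset{\mathcal{D}}{\otimes}F)_f\bigr)
\]
are weak equivalences, i.e.\ that the canonical comparison map $W\underset{\mathcal{D}}{\otimes^h}(G\circ F)\to G\bigl(W\underset{\mathcal{D}}{\otimes^h}F\bigr)$ is an isomorphism in $Ho(\mathcal{V})$. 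The strategy is to resolve an arbitrary homotopy weighted colimit into copowers $A\otimes c$ by objects $A$ of $\mathcal{V}$, coproducts, and finite homotopy colimits, and to check that $G$ is compatible with each.

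First I would record two structural properties of $G$. (i) Since $x$ is cofibrant, the computation in the proof of Proposition~\ref{adjunction}, applied to the full subcategory of $\mathcal{C}$ on the single object $x$, shows that $\mathcal{C}(x,-)\colon\mathcal{C}\to\mathcal{V}$ preserves fibrations and acyclic fibrations, i.e.\ is a right Quillen functor; since $\mathcal{C}$ and $\mathcal{V}$ are stable, $G$ therefore preserves homotopy pullback squares, which in the stable setting are precisely the homotopy pushout squares, so $G$ is exact — it commutes with (de)suspension and preserves all finite homotopy colimits, in particular homotopy pushouts and homotopy coequalizers. (ii) The hypothesis that $x$ is compact says precisely that $G$ preserves (homotopy) coproducts; together with exactness this forces $G$ to preserve sequential, hence arbitrary filtered, homotopy colimits.

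Next I would treat copowers. For cofibrant $A\in\mathcal{V}$ and cofibrant $c\in\mathcal{C}$ the enriched functoriality of $\mathcal{C}(x,-)$ supplies a canonical assembly map $A\otimes G(c)\to G(A\otimes c)$ in $Ho(\mathcal{V})$, and the claim is that it is an isomorphism. This is proved by cellular induction on $A$: it holds for $A=\mathbb{I}$; it is preserved by the suspension $A\mapsto\Sigma A$ and by homotopy pushouts in the variable $A$, using exactness of $G$ (and the fact that $-\otimes c$ and $-\otimes G(c)$ are homotopically left exact); and it is preserved by coproducts and filtered homotopy colimits in $A$, using~(ii), since the diagrams $A_i\otimes c$ then live in $\mathcal{C}$. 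Because every cofibrant object of $\mathcal{V}$ is, up to weak equivalence, built from the unit $\mathbb{I}$ under these operations, the claim follows.

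Finally I would assemble the argument, following the pattern of the proof of Lemma~\ref{lemma-Elm}. The homotopy weighted colimit $W\underset{\mathcal{D}}{\otimes^h}F$ is computed by the two-sided bar construction $B_\bullet(W,\mathcal{D},F)$, the simplicial object with $B_n=\coprod_{d_0\to\cdots\to d_n}Wd_n\otimes Fd_0$ a coproduct of copowers of the cofibrant objects $Fd_0$, its realization being exhibited through the skeletal filtration as a sequential homotopy colimit of objects built from the $B_n$ by finite homotopy colimits. Applying $G$ and invoking (i), (ii) and the copower comparison carries this description — compatibly with the comparison map above — to the analogous bar-construction description of $W\underset{\mathcal{D}}{\otimes^h}(G\circ F)$, so the comparison map is an isomorphism and $x$ is homotopy tiny. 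The hard part is the homotopical bookkeeping of this last step: one must check that, after the standard cofibrant replacements, the copowers $Wd_n\otimes Fd_0$, the coproducts forming $B_n$, and the latching maps of $B_\bullet$ are all cofibrant, so that the strict bar construction genuinely computes the derived weighted colimit, and that the assembly maps used in the induction are the ones coming from the relevant universal properties, so that the induction produces precisely the comparison map of Remark~\ref{compute-hwcolim-preservation} rather than merely an abstract isomorphism.
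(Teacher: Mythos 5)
Your overall architecture is closely parallel to the paper's: both you and the paper split the verification of the zig-zag of Remark~\ref{compute-hwcolim-preservation} into a ``copower'' piece and a ``conical homotopy colimit'' piece, and for the conical piece you use exactly the same two inputs (stability of $\mathcal{C}$ gives preservation of homotopy pushouts via pushouts $=$ pullbacks; compactness of $x$ gives preservation of homotopy coproducts), then upgrade to all homotopy colimits --- the paper does this via Lurie's Proposition~4.4.2.7 in Lemma~\ref{Lemma2-SS}, you via the standard ``finite colimits $+$ coproducts $\Rightarrow$ all colimits'' reasoning. Your bar-construction assembly in the final step is a legitimate alternative to the paper's unwinding of the weighted colimit over the twisted arrow category (the paper's Equation after~\ref{isom}); both decompose the weighted colimit into coproducts of copowers plus a conical homotopy colimit.

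Where the routes genuinely diverge is the copower step, and this is also where your proposal has a real gap. The paper isolates copowers through the notion of a \emph{good} object of $Sp^{\Sigma}$ (Definition~\ref{goodness}) and proves Lemma~\ref{Lemma1-SS} by explicitly checking $S^0$, $S^1$, and --- crucially --- $S^{-1}$, together with closure under weak equivalence and smash product. You instead propose a cellular induction on $A\in\mathcal{V}$ with base case $\mathbb{I}$ and inductive operations suspension, homotopy pushout, coproduct, and filtered homotopy colimit. But in $Sp^{\Sigma}$ the unit is \emph{not} a generator under this set of operations: the generating cofibrations involve shifted free spectra $F_n(-)$, and $F_nS^0\simeq S^{-n}$ is a desuspension, not obtainable from $S^0$ by suspensions, pushouts, coproducts, and filtered colimits. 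Your sentence ``every cofibrant object of $\mathcal{V}$ is, up to weak equivalence, built from the unit $\mathbb{I}$ under these operations'' is therefore false as stated for $\mathcal{V}=Sp^{\Sigma}$; you must add desuspension (equivalently $\Omega$, or the negative sphere $S^{-1}$) to the list and verify the induction step for it, which is precisely the most delicate case in the paper's Lemma~\ref{Lemma1-SS} (it requires the fact that $\Sigma$ is an auto-equivalence on $Ho(\mathcal{C})$ and a diagram chase going \emph{through} the $S^1$ case). Without that, your induction does not reach an arbitrary cofibrant $A$. A secondary, smaller issue: you write that $-\otimes c$ and $-\otimes G(c)$ are ``homotopically left exact,'' but these are left Quillen functors, hence preserve homotopy colimits, not limits; you presumably mean they preserve homotopy pushouts, which is what you need. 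Finally, you explicitly defer ``the homotopical bookkeeping'' of the bar-construction realization; that bookkeeping (Reedy-cofibrancy of $B_\bullet$, identification of the induced map with the canonical comparison map, etc.) is real work and is doing in your write-up the same job the paper's appeal to \cite[Remark~1.2.3]{Loregian} does, so the proposal as written stops short of a complete proof.
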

The next definition and the following two lemmas are preliminary to the proof of Proposition \ref{Prop2-SS}.
\begin{Def}\label{goodness}
An object $\sigma$ of $Sp^{\Sigma}$ is good if, for any $x$ cofibrant object of $\mathcal{C}$ and for any $y\in\mathcal{C}$, the composition
\begin{equation*}
        \sigma_c \otimes \mathcal{C}(x , y_f)_c \rightarrow \sigma_c \otimes \mathcal{C} (x, y_f) \rightarrow \sigma_c \otimes \mathcal{C}(x , (y_c)_f) \rightarrow \mathcal{C} (x , (\sigma_c \wedge y_c)_f) 
\end{equation*}
is a weak equivalence.
\end{Def}
\begin{Lemma}\label{Lemma1-SS}
The collection of all good objects coincides with $Ob(Sp^{\Sigma})$.
\end{Lemma}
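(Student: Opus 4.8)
The plan is to prove this by a transfinite cell induction in $Sp^{\Sigma}$: exhibit a generating object that is good and show that the class of good objects is closed under all the operations used to build arbitrary symmetric spectra from the generating cofibrations. First I would reduce to cofibrant $\sigma$. Since the defining zig-zag of Definition \ref{goodness} only ever mentions $\sigma_c$, an object is good exactly when its cofibrant replacement is, so it suffices to treat cofibrant $\sigma$; and if $\sigma \sim \sigma'$ are cofibrant, then $\sigma \otimes M \sim \sigma' \otimes M$ for every cofibrant $M$ and $\sigma \wedge y_c \sim \sigma' \wedge y_c$ (because $-\otimes-$ and $-\wedge-$ are left Quillen bifunctors, so preserve weak equivalences between cofibrant objects), whence all four terms of the zig-zag get replaced by weakly equivalent ones and goodness is invariant under weak equivalence between cofibrant objects. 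Together with the fact that weak equivalences are closed under retracts, this already shows that the class of good objects is closed under weak equivalence and under retracts.

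Next I would handle the base case. For $\sigma=\mathbb{I}$ the canonical isomorphisms $\mathbb{I}\otimes\mathcal{C}(x,-)\cong\mathcal{C}(x,-)$ and $\mathbb{I}\wedge y_c\cong y_c$ identify the composite of Definition \ref{goodness} with the comparison map $\mathcal{C}(x,y_f)_c\to\mathcal{C}(x,(y_c)_f)$, which is a weak equivalence since $x$ is cofibrant and both sides compute $\mathbf{R}\mathcal{C}(x,-)$ on the common homotopy type of $y$; hence $\mathbb{I}$ is good, and the shifted free spectra $F_n(\mathbb{I})$ follow from this together with the fact that in the stable model category $\mathcal{C}$ suspension and loops are inverse equivalences compatible with the mapping‑spectrum functor. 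I would then show the class of good objects is closed under coproducts, pushouts of cofibrations, and transfinite compositions: in the left-hand end of the zig-zag the functor $\sigma\otimes(-)$ is a left adjoint, so commutes with these colimits up to weak equivalence, and in the right-hand end $-\wedge y_c$ is likewise a left adjoint while $\mathbf{R}\mathcal{C}(x,-)$ sends homotopy pushouts to homotopy pullbacks, hence — by stability of $\mathcal{C}$ — to homotopy pushouts. Since $Sp^{\Sigma}$ is cofibrantly generated with generating cofibrations whose domains and codomains are built from the sphere, every cofibrant symmetric spectrum is a retract of a cell complex assembled from $\mathbb{I}$ by exactly these operations, so combining the closure properties gives that every object of $Sp^{\Sigma}$ is good.

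The step I expect to be the main obstacle is the closure under infinite coproducts and transfinite compositions: unlike homotopy pushouts, these are not automatically preserved by the derived mapping spectrum $\mathbf{R}\mathcal{C}(x,-)$, so this is where one must use the finer hypotheses — stability together with properness of $\mathcal{C}$ to control the homotopy colimits, and the boundedness/compactness data available whenever this notion of goodness is applied (as in the proof of Proposition \ref{Prop2-SS}), which is precisely the assertion that mapping out of the relevant source commutes with coproducts. Everything else — the reduction to cofibrant $\sigma$, invariance under weak equivalence, closure under retracts and homotopy pushouts, and the base case — is a formal consequence of $-\otimes-$ and $-\wedge-$ being left Quillen bifunctors and of $\mathcal{C}$ being stable.
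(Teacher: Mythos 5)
Your plan takes a genuinely different route from the paper's, and it contains a gap that you yourself flag but cannot close: it is fatal to the lemma \emph{as stated}. Definition~\ref{goodness} quantifies over \emph{every} cofibrant object $x$ of $\mathcal{C}$, not merely the compact ones. Your cell-induction scheme requires the class of good objects to be closed under arbitrary coproducts and transfinite compositions of cofibrations; unwinding the right-hand end of the defining zig-zag for $\sigma = \coprod_i \sigma_i$, one needs the canonical map
\[
\coprod_i \bigl(\sigma_{i,c} \otimes \mathcal{C}(x,(y_c)_f)\bigr) \longrightarrow \mathcal{C}\bigl(x, \bigl((\textstyle\coprod_i \sigma_{i,c})\wedge y_c\bigr)_f\bigr)
\]
to be a weak equivalence, which is precisely the assertion that $\mathcal{C}(x,-)$ commutes with coproducts up to homotopy, i.e.\ that $x$ is compact. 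Appealing to ``the compactness data available whenever goodness is applied'' quietly replaces the lemma with a weaker statement --- every spectrum is good \emph{when tested only against compact $x$}. That weakened version does suffice for the application in Proposition~\ref{Prop2-SS}, but it is not what Lemma~\ref{Lemma1-SS} claims.

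The paper's proof never runs into this problem because it uses a different set of closure operations: it shows the good objects contain $S^0$, $S^1$, $S^{-1}$ and are closed under weak equivalences and under the smash product $\sigma_1 \otimes \sigma_2$, and then concludes this already exhausts $Ob(Sp^\Sigma)$. Smash products never create a coproduct in the source of $\mathcal{C}(x,-)$, so compactness of $x$ is irrelevant there; what the paper needs instead is the pushout-product axiom and the equivalence $\Sigma \dashv \Omega$ on $Ho(\mathcal{C})$. Your base case (that $\mathbb{I}=S^0$ is good because both $\mathcal{C}(x,y_f)_c$ and $\mathcal{C}(x,(y_c)_f)$ compute $\mathbf{R}\mathcal{C}(x,-)$ on the homotopy type of $y$) and your weak-equivalence/retract reduction are both sound and agree with the paper's first bullet; the divergence, and the gap, is entirely in the choice of generating operations. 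To salvage a cell-induction proof with the full generality of Definition~\ref{goodness} you would need an independent reason why the displayed coproduct-comparison map is a weak equivalence for \emph{arbitrary} cofibrant $x$, and no such reason is available without compactness.
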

\begin{proof}
We divide the proof into five points:
\begin{itemize}
    \item[$\diamond$] If $\sigma_1 \sim \sigma_2$ and $\sigma_1$ is good, then $\sigma_2$ is good: WLOG, we assume that $\sigma_1$ and $\sigma_2$ are cofibrant, and we look at the commutative square
    \begin{equation*}
        \xymatrix{
        \sigma_1 \otimes \mathcal{C}(x , y_f)_c \ar[r]\ar[d]_{\alpha} & \sigma_1 \otimes \mathcal{C} (x, y_f) \ar[r] & \sigma_1 \otimes \mathcal{C}(x, (y_c)_f) \ar[r] & \mathcal{C} (x , (\sigma_1 \wedge y_c)_f) \ar[d]^{\beta} \\
        \sigma_2 \otimes \mathcal{C}(x , y_f)_c \ar[r] & \sigma_2 \otimes \mathcal{C} (x, y_f) \ar[r] &  \sigma_2 \otimes \mathcal{C}(x, (y_c)_f) \ar[r] & \mathcal{C} (x , (\sigma_2 \wedge y_c)_f) \\
        }
    \end{equation*}
    where we assume that the top composition of arrows is a weak equivalence.
    $\alpha$ and $\beta$ can be shown to be weak equivalences too: indeed, for any given acyclic cofibration from $\sigma_1$ to $\sigma_2$, the pushout-product axiom gives a weak equivalence between $\sigma_1 \otimes \mathcal{C}(x,y_f)_c$ and $\sigma_2 \otimes \mathcal{C}(x,y_f)_c$ (resp.\ between $\sigma_1 \wedge y_c$ and $\sigma_2 \wedge y_c$); using Ken Brown's Lemma, this result can be extended to any weak equivalence (and so to any zig zag of weak equivalences) between $\sigma_1$ and $\sigma_2$. 
    \item[$\diamond$] $S^1$ is good: using the definition of stable model category as it is given in \cite[Definition 2.1.1]{Schwede-Shipley}, the suspension functor $\Sigma$ and the loop functor $\Omega$ are inverse equivalences in the homotopy category. Then,
    \begin{equation}\label{iso-in-HoC}
    \begin{split}
        \Sigma ( Ho(\mathcal{C})(\gamma_{\mathcal{C}} x, \gamma_{\mathcal{C}} y)) & \xrightarrow{iso}  \Sigma ( Ho(\mathcal{C})(\Sigma \gamma_{\mathcal{C}} x , \Sigma \gamma_{\mathcal{C}} y)) \\ & \xrightarrow{iso} \Sigma ( Ho(\mathcal{C})(\gamma_{\mathcal{C}} x, \Omega \Sigma \gamma_{\mathcal{C}} y)) \\ & \xrightarrow{iso} \Sigma (\Omega Ho(\mathcal{C}) (\gamma_{\mathcal{C}} x, \Sigma \gamma_{\mathcal{C}} y)) \\
        & \xrightarrow{iso} Ho(\mathcal{C})(\gamma_{\mathcal{C}} x, \Sigma \gamma_{\mathcal{C}} y))
        \end{split}
    \end{equation}
    Translating the isomorphism given by the composition in \ref{iso-in-HoC} in the model category $\mathcal{C}$, and using that $x$ and $S^1$ are cofibrant, we obtain a weak equivalence:
        \begin{equation*}
        S^1 \otimes \mathcal{C}(x,y_f)_c \xrightarrow{\sim} \mathcal{C}(x, (S^1 \wedge y_c)_f)
    \end{equation*}
    \item[$\diamond$] $S^{-1}$ is good: we want to prove that the composition
    \begin{equation}\label{zig zag S^-1}
        S^{-1} \otimes \mathcal{C}(x, y_f)_c \rightarrow S^{-1} \otimes \mathcal{C}(x, y_f) \rightarrow S^{-1} \otimes \mathcal{C}(x, (y_c)_f) \rightarrow \mathcal{C}(x, (S^{-1}\wedge y_c)_f)
    \end{equation}
    is a weak equivalence.

    Being the suspension functor an equivalence on the homotopy category, the composition in \ref{zig zag S^-1} is a weak equivalence if and only if
\begin{equation}\label{zig zag S^-1 with S^1}
\begin{tikzcd}[column sep={7em,between origins}, row sep={3em,between origins}]
 S^1 \otimes S^{-1} \otimes \mathcal{C}(x, y_f)_c \ar[dr] & &  S^1 \otimes (S^{-1} \otimes \mathcal{C}(x, (y_c)_f) )_c \ar[dr] & \\
 & S^1 \otimes (S^{-1} \otimes \mathcal{C}(x, y_f))_c \ar[ur] & & S^1 \otimes \mathcal{C}(x, (S^{-1}\wedge y_c)_f)_c
\end{tikzcd}    
\end{equation}

    is a weak equivalence.
    The composition in \ref{zig zag S^-1 with S^1} is, up to an isomorphism,
\begin{equation}\label{isomorphic zig zag}
\begin{tikzcd}[column sep={8em,between origins}, row sep={3em,between origins}]
 \mathcal{C}(x, y_f)_c \ar[dr] & &  S^1 \otimes (S^{-1} \otimes \mathcal{C}(x, (y_c)_f) )_c \ar[dr] & \\
 & S^1 \otimes (S^{-1} \otimes \mathcal{C}(x, y_f))_c \ar[ur] & & S^1 \otimes \mathcal{C}(x, (S^{-1}\wedge y_c)_f)_c
\end{tikzcd}        
\end{equation}
and we can prove that \ref{isomorphic zig zag} is a weak equivalence by considering the following commutative square:
    \begin{equation*}
        \xymatrix{ \mathcal{C} (x, y_f)_c \ar[r]\ar[d]_{\sim} & S^1 \otimes (S^{-1} \otimes \mathcal{C} (x, y_f))_c \ar[r] & S^1 \otimes (S^{-1} \otimes \mathcal{C}(x, (y_c)_f))_c \ar[d] \\
        \mathcal{C}(x, (y_c)_f)_c \ar[r]^-{=} & \mathcal{C}(x, (S^1 \wedge (S^{-1} \wedge y_c))_f) & S^1 \otimes \mathcal{C} (x, (S^{-1} \wedge y_c)_f)_c \ar[l]_{\sim}^{S^1 good} 
         }
    \end{equation*}
    \item[$\diamond$] $S^0$ is good: this follows from the isomorphisms, for any $\sigma \in Sp^\Sigma$ and $y\in\mathcal{C}$, between $S^0 \otimes \sigma$ and $\sigma$, and $S^0 \wedge y$ and $y$.
    \item[$\diamond$] if $\sigma_1$ and $\sigma_2$ are good, then $\sigma_1 \otimes \sigma_2$ is good too: WLOG, we assume that $\sigma_1$ and $\sigma_2$ are cofibrant, and we look at the commutative diagram
    \begin{tiny}
    \begin{equation*}
        \xymatrix{ & (\sigma_1 \otimes \sigma_2) \otimes \mathcal{C}(x, y_f) \ar[r] & \sigma_1 \otimes \sigma_2 \otimes \mathcal{C}(x, (y_c)_f) \ar[dr] & \\
        (\sigma_1 \otimes \sigma_2) \otimes (\mathcal{C}(x, y_f))_c \ar[ur]\ar[dd]^{\wr}_{\sigma_2 \ good} & & & \mathcal{C}(x, ((\sigma_1 \otimes \sigma_2) \wedge y_c)_f)\ar[dd]_{=} \\
        & & & \\
        \sigma_1 \otimes \mathcal{C}(x, (\sigma_2 \wedge y_c)_f) \ar[dr]^{\sim}_{\sigma_1 good} & & & \mathcal{C}(x, ( \sigma_1 \wedge (\sigma_2 \wedge y_c))_f) \ar[dl]_{\epsilon}^{\sim} \\ & \mathcal{C}(x, (\sigma_1 \wedge ( (\sigma_2 \wedge y_c)_f)_c)_f) & \mathcal{C}(x, ( \sigma_1 \wedge (\sigma_2 \wedge y_c)_f)_f) \ar[l]_-{\delta}^{=} & }
    \end{equation*}
    \end{tiny}
    where $\epsilon$ is a weak equivalence because $(\sigma_2 \wedge y_c) \rightarrow (\sigma_2 \wedge y_c)_f$ is an acyclic cofibration between cofibrant objects, and the functor $\sigma_1 \wedge -$ preserves acyclic cofibrations.
\end{itemize}
 We have shown that the collection of all good objects of $Sp^{\Sigma}$ contains $S^0, S^1, S^{-1}$ and it is closed under weak equivalences and tensor product: this is precisely $Ob(Sp^{\Sigma})$.
\end{proof}

\begin{Lemma} \label{Lemma2-SS}
For any $x$ compact object of a stable model category $\in\mathcal{C}$, $\mathbf{R}\mathcal{C}(x, -)$ preserves homotopy colimits.
\end{Lemma}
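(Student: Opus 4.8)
The plan is to reduce preservation of an arbitrary homotopy colimit to preservation of three elementary operations — small coproducts, homotopy pushouts, and sequential homotopy colimits — and then to check each one, using the compactness of $x$ for the first and exactness of $\mathbf{R}\mathcal{C}(x,-)$ for the other two. We work with the spectral enrichment of $\mathcal{C}$, so $\mathbf{R}\mathcal{C}(x,-) : Ho(\mathcal{C}) \to Ho(Sp^{\Sigma})$, and, as elsewhere in this section, we may assume the diagram $F : \mathcal{D} \to \mathcal{C}$ in question is objectwise fibrant and cofibrant, so that the point-set constructions below compute homotopy colimits. By the Bousfield--Kan formula, $\mathrm{hocolim}_{\mathcal{D}} F$ is the geometric realization of the simplicial object $B_\bullet(F)$ with $B_n(F) = \coprod_{d_0 \to \cdots \to d_n} F(d_0)$, and this realization is the sequential homotopy colimit of its skeletal filtration $\mathrm{sk}_0 \to \mathrm{sk}_1 \to \cdots$, in which $\mathrm{sk}_n$ is obtained from $\mathrm{sk}_{n-1}$ by a homotopy pushout whose other two corners are built, up to weak equivalence, from coproducts of values $F(d_0)$ by finite homotopy colimits (copowering by the finite simplicial sets $\Delta^n$ and $\partial\Delta^n$). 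So it is enough to show that $\mathbf{R}\mathcal{C}(x,-)$ commutes with small coproducts, homotopy pushouts and sequential homotopy colimits.

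For coproducts, recall that the sphere is a compact generator of $Ho(Sp^{\Sigma})$, so a map there is an isomorphism as soon as it is a $\pi_*$-isomorphism; and $\pi_n \mathbf{R}\mathcal{C}(x,-)$ is the graded hom-functor $[x,-]_n$ on $Ho(\mathcal{C})$. As $x$, hence every suspension $\Sigma^n x$, is compact, $[x,-]_n$ sends coproducts to direct sums; since $\pi_n$ of a homotopy coproduct of spectra is likewise the direct sum of the $\pi_n$'s, the comparison map from the homotopy coproduct of the $\mathbf{R}\mathcal{C}(x,F(d))$ to $\mathbf{R}\mathcal{C}(x,\coprod_d F(d))$ is an isomorphism in $Ho(Sp^{\Sigma})$. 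For homotopy pushouts and sequential colimits, note that $\mathcal{C}(x,-)$ is right Quillen for $x$ cofibrant (its left adjoint is the tensoring $-\otimes x$), so $\mathbf{R}\mathcal{C}(x,-)$ preserves homotopy pullback squares; since both $\mathcal{C}$ and $Sp^{\Sigma}$ are stable, these coincide with homotopy pushout squares, so $\mathbf{R}\mathcal{C}(x,-)$ is exact. An exact functor preserves the zero object and, via split cofibre sequences, finite coproducts, hence homotopy pushouts; and applying it to the mapping telescope $\coprod_n a_n \xrightarrow{1-\mathrm{shift}} \coprod_n a_n \to \mathrm{hocolim}_n a_n$, together with the preservation of coproducts already established, shows it also preserves sequential homotopy colimits. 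Assembling these facts along the reduction gives the lemma.

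The conceptual content is light; the work lies in two bookkeeping points: verifying that the skeletal decomposition of the Bousfield--Kan construction really uses only the three operations we control — in particular that the latching copowers by finite simplicial sets are themselves assembled from coproducts and homotopy pushouts — and promoting the compactness of $x$, phrased as an isomorphism of graded hom-groups, to a weak equivalence of derived mapping spectra, where one uses the compact generation of $Ho(Sp^{\Sigma})$ by the sphere and the fact that a $\pi_*$-isomorphism of symmetric spectra is a stable equivalence. I expect the first of these, the Bousfield--Kan reduction, to be the more laborious.
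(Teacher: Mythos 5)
Your proof is correct, but it takes a different route to assemble the pieces than the paper does. The paper passes through the $\infty$-category associated to the model category and invokes Lurie's Proposition 4.4.2.7 (with $k = \omega$), which says that a functor preserving pushouts and small coproducts preserves all small colimits; it then checks those two cases exactly as you do — homotopy pushouts because pushout and pullback squares coincide in a stable model category and $\mathbf{R}\mathcal{C}(x,-)$, being the derived right adjoint of $-\otimes x$, preserves homotopy pullbacks, and homotopy coproducts because $x$ is compact. Your argument carries out the assembly step by hand: you decompose a general homotopy colimit via the Bousfield--Kan simplicial replacement, run the skeletal filtration to reduce to coproducts, homotopy pushouts and sequential homotopy colimits, and handle the sequential case by a telescope once the other two are known. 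This buys you a self-contained proof that never leaves the model-categorical world and makes explicit the point where compactness is promoted from a statement about graded hom-groups to a weak equivalence of mapping spectra (via $\pi_*$-detection of stable equivalences). What you lose is brevity, and you take on the bookkeeping burden you flag — checking that latching objects and the copowers by $\Delta^n$, $\partial\Delta^n$ really are assembled from the controlled operations — which the $\infty$-categorical citation packages away. Both proofs are sound; yours trades a black-box reference for a longer but more transparent reduction.
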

\begin{proof}
It is well known that to any model category $\mathcal{C}$ it is possible to associate an infinity category $F\mathcal{C}$ s.t., given a functor $G$ between two model categories $\mathcal{C}$ and $\mathcal{D}$, the corresponding functor $FG : F\mathcal{C} \rightarrow F\mathcal{D}$ preserves colimits if and only if $G$ preserves homotopy colimits; one can then recover the fact that $\mathbf{R} \mathcal{C}(x, -)$ preserves homotopy colimits from the fact that it preserves homotopy pushouts and coproducts, and using \cite[Proposition 4.4.2.7]{Lurie} for $k = \omega$. Indeed $\mathbf{R}\mathcal{C} (x, -)$ preserves homotopy pushout squares, since, in stable model categories, pushout squares and pullback squares coincide; also, being $x$ compact, $\mathbf{R} \mathcal{C}(x, -)$ preserves homotopy coproducts.
\end{proof}
We now have all the ingredients we need in order to prove Proposition \ref{Prop2-SS}.
\begin{proof}[Proof of Proposition \ref{Prop2-SS}]
Let $x$ be a cofibrant compact object of $\mathcal{C}$. We want to show that 
\begin{equation} \label{isom}
    \mathbf{R}\mathcal{C} (x , -) ( W \underset{\mathcal{D}}{\otimes}^h F)) \xrightarrow{iso} W \underset{\mathcal{D}}{\otimes^h}, \mathbf{R}\mathcal{C} (x , -) (F)
\end{equation}
Using the description of homotopy weighted colimits in terms of homotopy colimits on the twisted arrow category of $\mathcal{D}$ (see \cite[Remark 1.2.3]{Loregian}), \ref{isom} becomes: 
    \begin{equation*}
    \mathbf{R}\mathcal{C} (x, -) (\underset{d \rightarrow d'}{hcolim}(\gamma_{Sp^{\Sigma}}W(d')\wedge^h \gamma_{\mathcal{C}} F(d)) \xrightarrow{iso} \underset{d \rightarrow d'}{hcolim} ( \gamma_{Sp^{\Sigma}} W(d') \otimes^h \mathbf{R}\mathcal{C} (x , -) (\gamma_\mathcal{C} (F(d))_f))
    \end{equation*}
    which can be shown using Lemma \ref{Lemma1-SS} and Lemma \ref{Lemma2-SS}.
\end{proof}
\begin{Cor}\label{Corollary-SS}
Let $\mathcal{C}$ be a stable model category, and let $x$ be a cofibrant object. Then $x$ is compact if and only if it is homotopy tiny.
\end{Cor}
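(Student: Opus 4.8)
The plan is to obtain the corollary as an immediate consequence of the two preceding propositions, which together furnish the two implications; there is essentially no new content to produce. Proposition \ref{Prop1-SS} supplies one direction and Proposition \ref{Prop2-SS} the other, so the ``proof'' amounts to assembling these and noting where the cofibrancy hypothesis is actually used.

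First I would dispatch the implication ``homotopy tiny $\Rightarrow$ compact''. This is exactly Proposition \ref{Prop1-SS}, which holds for an arbitrary object of a stable model category and does not invoke cofibrancy at all; hence if $x$ is cofibrant and homotopy tiny, it is in particular compact. Next I would dispatch ``compact $\Rightarrow$ homotopy tiny'' for a cofibrant $x$: this is precisely the statement of Proposition \ref{Prop2-SS}. Combining the two yields the claimed equivalence for cofibrant objects.

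Since both halves are already established, there is no genuine obstacle here. The only point I would flag explicitly is that the cofibrancy assumption is needed solely for the ``compact $\Rightarrow$ homotopy tiny'' direction (it enters through the reductions to Lemma \ref{Lemma1-SS} and Lemma \ref{Lemma2-SS} carried out in the proof of Proposition \ref{Prop2-SS}), whereas the reverse implication holds unconditionally. Stating this makes transparent why the corollary is formulated for cofibrant objects, and it also clarifies how the corollary will later be applied: a set $\mathcal{G}$ of compact, cofibrant (and fibrant) generators consists of homotopy tiny objects, which is exactly the hypothesis required to feed into Corollary \ref{result} in the proof of Theorem \ref{SS-rephrased}.
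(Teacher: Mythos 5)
Your proposal is correct and coincides exactly with the paper's proof, which simply invokes Proposition \ref{Prop1-SS} and Proposition \ref{Prop2-SS} for the two implications. The additional remark about cofibrancy being needed only for the ``compact $\Rightarrow$ homotopy tiny'' direction is a helpful clarification but does not change the argument.
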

\begin{proof}
Straight from Proposition \ref{Prop1-SS} and Proposition \ref{Prop2-SS}.
\end{proof}
We finally prove Theorem \ref{SS-th}:
\begin{proof}[Proof of Theorem \ref{SS-th}]
Straight from Corollary \ref{Corollary-SS} and Corollary \ref{result}.
\end{proof}

As anticipated as the beginning of the section, we have shown that Corollary \ref{result} is a generalization of Elmendorf's Theorem and Scwede--Shipley's Theorem: indeed, these can both be proven as corollaries of our result.

\newpage
\printbibliography

\end{document}